\newcommand{\Z}[0]{\mathbb{Z}}
\newcommand{\R}[0]{\mathbb{R}}
\newcommand{\C}[0]{\mathbb{C}}
\newcommand{\Q}[0]{\mathbb{Q}}
\newcommand{\vs}[0]{\vspace{3mm}}
\newcommand{\ph}[0]{\varphi}
\newcommand{\q}[1]{\mbox{\bfseries{\textit{#1}}}}
\newcommand{\too}[0]{\longrightarrow}
\newcommand{\sst}[0]{\subset}
\newcommand{\qbin}[2]{ \left[ \begin{array}{c} #1 \\ #2
    \end{array} \right] }
\newcommand{\imm}[1][r] {\ar@{^{(}->}[#1]}
\renewcommand{\ni}[0]{\noindent}
\newcommand{\wI}[0]{\widetilde{I}}
\newcommand{\wqbin}[2]{ \widetilde{\left[ \begin{array}{c} #1 \\ #2
    \end{array} \right] }}
\newcommand{\tss}[0]{\supset}
\newcommand{\pmu}[0]{{\pm 1}}
\newcommand{\CA}[0]{\overline{C}}
\providecommand{\br}[1]{G_{#1}}
\def\qed{\ifmmode $\Box$ \else{\unskip\nobreak\hfil
\penalty50\hskip1em\null\nobreak\hfil $\Box$
\parfillskip=0pt\finalhyphendemerits=0\endgraf}\fi}
\def\W{W}
\newtheorem{teo}{Theorem}[section]
\newtheorem{lem}[teo]{Lemma}
\newtheorem{cor}[teo]{Corollary}
\newtheorem{prop}[teo]{Proposition}
\theoremstyle{definition}
\newtheorem{df}[teo]{Definition}
\theoremstyle{remark}
\newtheorem{os}[teo]{Remark}
\numberwithin{equation}{section}
\begin{document}

\title{Cohomology of affine Artin groups  and
applications}

\author[F. Callegaro]{Filippo Callegaro}
\address{Scuola Normale Superiore\\ P.za dei Cavalieri, 7, Pisa, Italy}
\email{f.callegaro@sns.it}

\author[D. Moroni]{Davide Moroni}
\address{Dipartimento di Matematica ``G.Castelnuovo''\\
P.za A.~Moro, 2, Roma, Italy -and- ISTI-CNR\\ Via G.~Moruzzi, 3,  Pisa, Italy}
\email{davide.moroni@isti.cnr.it}

\author[M. Salvetti]{Mario Salvetti}
\address{Dipartimento di Matematica ``L.Tonelli''\\ Largo B.~Pontecorvo, 5, Pisa, Italy}
\email{salvetti@dm.unipi.it}
\thanks{The third author is partially supported by M.U.R.S.T. 40\%}


\subjclass[2000]{20J06; 20F36}
\keywords{Affine Artin groups, twisted cohomology, group re\-pre\-sen\-ta\-tions}

\date {May 2006}

\begin{abstract}
The result of this paper is the determination of the cohomology of
Artin groups of type $A_n,\ B_n$ and $\tilde{A}_{n}$ with
non-trivial local coefficients. The main result (Theorem
\ref{t:cohomqt}) is an explicit computation of the cohomology of
the Artin group of type $B_n$  with coefficients over the module
$\Q[q^{\pmu},t^{\pmu}].$ Here the first $(n-1)$ standard
generators of the group act by $(-q)-$multiplication, while the
last one acts by $(-t)-$multiplication. The proof uses some
technical results from previous papers plus computations over a
suitable spectral sequence. The remaining cases follow from an
application of Shapiro's lemma, by considering some well-known
inclusions: we obtain the rational cohomology of the Artin group
of affine type $\tilde{A}_{n}$ as well as the cohomology of the
classical braid group $\mathrm{Br}_{n}$ with coefficients in the
$n$-dimensional representation presented in \cite{tong}. The
topological counterpart is the explicit construction of 
finite $CW-$complexes endowed with a free action of the Artin
groups, which are known to be $K(\pi,1)$ spaces in some cases (including
finite type groups). Particularly simple formulas for the 
Euler-characteristic of these orbit spaces are derived.

\end{abstract}

\maketitle

\vspace{1cm}

\section{Introduction}  Recall that for each Coxeter group $W$ one has
a group extension $G_W,$ usually called {\it Artin group} of type
$W$ (see Section \ref{sec:prelim}). In this paper we give a detailed
calculation of the cohomology of some Artin groups with non-trivial
local coefficients.  Let $R:=\Q[q^{\pmu},t^{\pmu}]$ be the ring of
two-parameters Laurent polynomials. The main result (Theorem
\ref{t:cohomqt}) is the computation of the cohomology of the Artin
group $G_{B_n}$ (of type $B_n$) with coefficients in the module
$R_{q,t}.$ The latter is the ring $R$ with the module structure
defined as follows: the generators associated to the first $n-1$
nodes of the Dynkin diagram of $B_n$ act by $(-q)-$multiplication;
the one associated to the last node acts by $(-t)-$multiplication.

Let $\ph_m(q)$ be the $m$-th cyclotomic polynomial in the variable
$q$. Define the $R$-modules ($m>1, \ i\geq 0$)
$$
\{m \}_i = R/(\ph_m(q), q^it+1).
$$
and for $m = 1$ set:
$$
\{ 1 \}_i = R/(q^it +1).
$$
Notice that the modules $\{ m \}_i$ are all non isomorphic as
$R$-modules. $\{m \}_i$ and $\{m'\}_{i'}$ are isomorphic as
$\Q[q^\pmu]$-modules if and only if $m = m'$ and are isomorphic as
$\Q[t^\pmu]$-modules if and only if $\phi(m) = \phi(m')$  ($\phi$ is
the Euler function) and $\frac{m}{(m,i)} = \frac{m'}{(m,i')}$.

Our main result is the following

\begin{teo} \label{t:cohomqt}
$$
H^i(G_{B_n}, R_{q,t})= \left\{
\begin{array}{ll}
\bigoplus_{d \mid n,\, 0 \leq k \leq d-2}
\{ d \}_k \oplus \{ 1 \}_{n-1} & \mbox{ if } i = n\\
\bigoplus_{
d \mid n, \, 0 \leq k \leq d-2, \, d \leq \frac{n}{j+1} }
\{ d \}_k & \mbox{ if }i = n -2j \\
\bigoplus_{d \nmid n, \, d \leq \frac{n}{j+1} } \{ d \}_{n-1} &
\mbox{ if }i = n -2j -1.
\end{array}
\right.
$$
\qed
\end{teo}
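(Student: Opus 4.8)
The plan is to compute $H^*(G_{B_n}, R_{q,t})$ via the algebraic Salvetti complex, i.e. the finite free $R$-complex $C^*$ whose degree-$k$ piece is free on the $k$-subsets of the generating set of the Coxeter group $B_n$, with the twisted differential built from the local system $R_{q,t}$. Since the first $n-1$ generators act by $-q$ and the last by $-t$, the complex $C^*$ naturally splits according to whether a given simplex contains the special last node or not; this produces a short exact sequence of complexes relating the $B_n$-complex to two copies of the $A_{n-1}$-type complex (one with the extra variable $t$ playing a bookkeeping role, one without), and hence a long exact sequence in cohomology. The idea is to set up the associated spectral sequence (or equivalently, filter $C^*$ by the number of ``$t$-strands'') and run it: the $E_1$-page will be expressed in terms of the already-known cohomology of $G_{A_{n-1}}$ with coefficients in $\Q[q^{\pmu}]$ (the classical braid group result), tensored over the $t$-direction.

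First I would recall the explicit description of $H^*(G_{A_{n-1}}, \Q[q^{\pmu}])$ in terms of the modules $\Q[q^{\pmu}]/\ph_d(q)$ for $d \mid$ appropriate integers (this is the Frenkel--De Concini--Salvetti computation, presumably quoted from the ``previous papers'' mentioned in the abstract), and reorganize it so that the contribution of each cyclotomic prime $\ph_d$ is isolated. Then I would localize/complete at each maximal ideal of $R$ — equivalently, decompose $R_{q,t}$ according to the primary components $(\ph_d(q), q^i t + 1)$ — so that the spectral sequence breaks into independent blocks indexed by pairs $(d, i)$. In each such block the differentials on the $E_1$-page are multiplication-type maps between the modules $\{d\}_k$; the key combinatorial input is to identify exactly which differentials are nonzero. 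This is where the bound $d \leq \tfrac{n}{j+1}$ on the cohomological degree $i = n - 2j$ (resp. $i = n-2j-1$) enters: the differential pattern forces cancellation of the $\{d\}_k$ summands except in the range where the multiplicity of $d$ (roughly $\lfloor n/d \rfloor$) exceeds $j$.

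The main steps in order: (1) write down $C^*(B_n)$ and the $t$-filtration, producing the long exact sequence / spectral sequence; (2) identify $E_1$ with a bicomplex built from $H^*(G_{A_{n-1}}, \Q[q^{\pmu}])$ and a Koszul-type complex in the variable $t$; (3) decompose into $(d,i)$-blocks and compute the surviving $E_\infty$ terms block by block, tracking the index $k$ of $\{d\}_k$ through the connecting maps; (4) separately handle the $d=1$ case, which produces the distinguished summand $\{1\}_{n-1}$ in top degree $i=n$ and the $\{1\}_{n-1}$'s in odd degrees (here $q^i t + 1$ degenerates to $t$-like behavior); (5) solve the extension problems — since each $\{m\}_i$ is a cyclic $R$-module and the differentials are explicit, one checks the spectral sequence degenerates at $E_2$ or that the extensions are trivial, using the fact (stated in the excerpt) that the $\{m\}_i$ are pairwise non-isomorphic, so no nontrivial gluing is possible within a single cohomological degree.

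The hard part will be step (3): controlling the differentials in the spectral sequence finely enough to prove that precisely the stated summands survive — in particular establishing the sharp degree bound $d \leq n/(j+1)$ rather than a weaker estimate. This requires a careful analysis of how the poset/simplicial differential of the $A_{n-1}$-complex interacts with the cyclotomic divisibility conditions, and I expect it to involve an inductive argument on $n$ (peeling off one strand at a time via the long exact sequence) together with a bookkeeping of binomial-type coefficients modulo $\ph_d$. A secondary subtlety is ensuring the extension problems in step (5) are genuinely trivial; the non-isomorphism statement for the $\{m\}_i$ quoted just before the theorem is exactly the tool that rules out nonsplit extensions, so invoking it correctly is essential.
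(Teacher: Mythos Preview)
Your overall plan---Salvetti complex, filter, feed in the known $A$-type cohomology, then track cyclotomic pieces through the spectral sequence---is the same skeleton the paper uses, but two of the load-bearing steps are specified incorrectly and would not go through as written.

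\textbf{The filtration.} You propose to split $C^*$ according to whether a simplex contains the last node (the unique ``$t$-node''), giving a two-term filtration and hence a long exact sequence with one piece equal to the $A_{n-1}$-complex. The trouble is the other piece: the subcomplex $F^1C_n=\{\Gamma:n\in\Gamma\}$ is \emph{not} an $A_{n-1}$-type complex, nor a $B_{n-1}$-complex after any simple change of variables, because its coboundary coefficients are the $B_n$ Poincar\'e ratios $W^{B_n}_{\Gamma\cup\{j\}}/W^{B_n}_\Gamma$, which still mix $q$ and $t$. The paper instead takes the finer filtration $F^sC_n=\{$strings ending in $1^s\}$, so that the $s$-th graded piece consists of strings of the form $A\,0\,1^s$; the crucial point is that the forced $0$ in position $n-s$ \emph{disconnects} the Dynkin diagram, and therefore $F^s/F^{s+1}\cong \overline{C}_{n-s-1}[s]$ is a genuine $A_{n-s-1}$-complex with $t$ acting trivially. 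This is what makes the $E_1$-page computable from the De~Concini--Procesi--Salvetti result. Your two-step split does not achieve this disconnection, and the proposed induction on $n$ does not obviously repair it.

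\textbf{Degeneration and higher differentials.} Even with the correct filtration, the spectral sequence does \emph{not} collapse at $E_2$. For each $m\mid n$ the $d_1$ between the relevant $\{m\}[t^{\pm1}]$-modules vanishes (because $\varphi_m\mid[\lambda m]_q$), and the first nonzero differential is $d_{m-1}$; for $m\nmid n$ the $d_1$'s are isomorphisms onto their image but one must still check that the potential $d_m$ into $E_m^{n,0}$ is zero. The delicate part is the column $s=n$: all the surviving top-degree classes land there via differentials $d_m$ for $m\mid n$, and showing these are injective with the correct cokernel is exactly the content of Lemmas~\ref{l:ideali2} and~\ref{l:ideali1} (an explicit reduction of the ideal $I(n)=(\,[{}^{\,n}_{n-d}]'_{q,t}:d\mid n\,)$ to a product of the pairwise-coprime ideals $(\varphi_d,q^it+1)$). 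Your outline does not anticipate these higher differentials, and the localization-at-$(\varphi_d,q^it+1)$ strategy, while reasonable in spirit, would still have to reproduce this ideal computation.

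Finally, a minor point: pairwise non-isomorphism of the $\{m\}_i$ is not by itself enough to kill extensions. What actually makes the $E_\infty$ extensions split is that the annihilators of the surviving pieces in a fixed total degree are pairwise comaximal (this is established in Lemma~\ref{l:ideali1}), so the Chinese remainder theorem applies.
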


\ni The proof uses the spectral sequence associated to  a natural
filtration of the algebraic complex exhibited in \cite{S2}, plus
some technical results from \cite{DPS}.

We apply Shapiro's lemma to a well known inclusion of \
$G_{\tilde{A}_{n-1}}$ into $ G_{B_n}$  to derive the cohomology of
$G_{\tilde{A}_{n-1}}$ over the module $\Q[q^{\pmu}],$ the action of
each standard generator being $(-q)-$multiplication.

By considering another natural   inclusion of  \ $G_{{B}_{n}}$
into the classical braid group $\mathrm{Br}_{n+1}:=G_{A_n},$ we
also use Shapiro's lemma in order to identify the cohomology of
$G_{B_n}$ with coefficients in $R_{q,t}$ with that of
$\mathrm{Br}_{n+1}$ with coefficients in the irreducible
$(n+1)-$dimensional representation of $\mathrm{Br}_{n+1}$ found in
\cite{tong}, twisted by an abelian representation. We derive the
trivial $\Q-$cohomology of $G_{\tilde{A}_{n-1}}$ as well as the
cohomology of the braid group over the irreducible representation
in \cite{tong}.

Computation of the cohomology of Artin groups was done by several
people: for classical \emph{braid groups} and trivial coefficients
it was first given by F. Cohen \cite{coh}, and independently by A.
Va{\u\i}n\-{\v{s}}te{\u\i}n \cite{Ve} (see also
\cite{Ar,Br,Br-Sa,Fu}). For Artin groups of type $C_n,$\ $D_n$  see
\cite{Go}, while for the exceptional cases see \cite{S2}, where the
$\Z$-module structure was given, while the ring structure was
computed in \cite{La}. The case of non-trivial coefficients over the
module of Laurent polynomials $\mathbb Q[q^{\pm 1}]$ is interesting
because of its relation with the trivial $\Q$-cohomology of the
Milnor fibre of the naturally associated bundle. For the classical
braid groups see \cite{Fr,Ma,DPS}, while for cases $C_n,\ D_n$ see
\cite{DPS2}. For computations over the integral Laurent polynomials
$\mathbb Z[q^{\pm 1}],$ see \cite{CS} for the exceptional cases and
recently \cite{C2} for the case of braid groups, and \cite{DSS} for
the top cohomologies in all cases. In the case of Artin groups of
non-finite type, some computations were given in \cite{SS} and
\cite{CD}.

The computations of Theorem \ref{t:cohomqt} could be partially
extended to integral coefficients; however, major complications
occur because the Laurent polynomial ring $\mathbb Z[q^{\pm 1}]$ is
not a P.I.D..

In the last part we also indicate (see \cite{CMS}) an explicit 
construction of finite
CW-complexes which are retracts of {\it orbit spaces} associated to
Artin groups. The construction works as in \cite{S2}, with few
variations necessary for infinite type Artin groups (see also
\cite{CD} for a different construction). The Artin group identifies
with the fundamental group of the orbit space, and the standard
presentation follows easily (see \cite{Br, Ng, van}). The Euler
characteristic of the orbit space reduces to that of a simplicial
complex and in some cases one has a particularly simple
formula. It is conjectured that such orbit spaces are always
$K(\pi,1)$ spaces; for the affine groups, this is known in case
$\tilde{A}_n,\ \tilde{C}_n$ (see \cite{Ok,CP}) and recently also for
$\tilde{B}_n$ (\cite{CMS1}) (see also \cite{CD} for a different
class of Artin groups of infinite type).

Notice also the geometrical meaning of the two-parameters cohomology
of $G_{{B}_{n}}:$  similar to the one-parameter case, it is
equivalent to the trivial cohomology of the ``homotopy-Milnor
fibre'' associated to the natural map of the orbit space onto a
two-dimensional torus.

The main results of this paper were announced (without proof) in
\cite{CMS}.

\section{Preliminary results}\label{sec:prelim}
In this section we briefly fix the notation and recall some
preliminary results.
\subsection{Coxeter groups and Artin  groups}
A \emph{Coxeter graph} is a finite undirected graph, whose edges
are labelled with integers $\geq 3$ or with the symbol
 $\infty$.

Let  $S$ be the vertex set of a Coxeter graph. For every pair of
vertices $s, t \in S$ ($s\neq t$) joined by an edge, define
$m(s,t)$ to be the label of the edge joining them. If $s, \, t$
are not joined by an edge, set by convention $m(s,t)=2$. Let also
$m(s,s)=1$ (see \cite{bour,hum}).

Two groups are associated to  a Coxeter graph: the \emph{Coxeter
group} $W$ defined by
$$
W=\langle s \in S \, |\, (st)^{m(s,t)}=1 \,\, \forall s,t \in S
\,\, \textrm{such that}\,\, m(s,t)\neq \infty\rangle
$$
and the \emph{Artin  group} $G$ defined by (see \cite{Br-Sa}):
$$
G=\langle s \in S \, | \,
\begin{underbrace}{stst\ldots}\end{underbrace}_{m(s,t)-\mathrm{terms}}=\begin{underbrace}{tsts\ldots}\end{underbrace}_{m(s,t)-\mathrm{terms}}
\,\, \forall s,t \in S \,\, \textrm{such that}\,\, m(s,t)\neq
\infty\rangle.
$$
Loosely speaking, $G$ is the group obtained by dropping the
relations $s^2=1$ ($s\in S$) in the presentation for $W$.

In this paper, we are primarily interested in Artin  groups
associated to Coxeter graph of type $A_n$, $B_n$ and
$\tilde{A}_{n-1}$ (see Figure \ref{fig:dinkyn}).
\begin{figure}[hbtp]
\begin{center}
\begin{picture}(0,0)%
\includegraphics{fig/dinkyn.pstex}%
\end{picture}%
\setlength{\unitlength}{3947sp}%
\begingroup\makeatletter\ifx\SetFigFont\undefined%
\gdef\SetFigFont#1#2#3#4#5{%
  \reset@font\fontsize{#1}{#2pt}%
  \fontfamily{#3}\fontseries{#4}\fontshape{#5}%
  \selectfont}%
\fi\endgroup%
\begin{picture}(5720,1937)(-4070,-4328)
\put(-1076,-3076){\makebox(0,0)[lb]{\smash{{\SetFigFont{9}{10.8}{\familydefault}{\mddefault}{\updefault}{\color[rgb]{0,0,0}$\sigma_n$}%
}}}}
\put(1510,-3891){\makebox(0,0)[lb]{\smash{{\SetFigFont{9}{10.8}{\familydefault}{\mddefault}{\updefault}{\color[rgb]{0,0,0}$\tilde{\sigma}_2$}%
}}}}
\put(1650,-3228){\makebox(0,0)[lb]{\smash{{\SetFigFont{9}{10.8}{\familydefault}{\mddefault}{\updefault}{\color[rgb]{0,0,0}$\tilde{\sigma}_3$}%
}}}}
\put(994,-4289){\makebox(0,0)[lb]{\smash{{\SetFigFont{9}{10.8}{\familydefault}{\mddefault}{\updefault}{\color[rgb]{0,0,0}$\tilde{\sigma}_1$}%
}}}}
\put(534,-3376){\makebox(0,0)[lb]{\smash{{\SetFigFont{12}{14.4}{\familydefault}{\mddefault}{\updefault}{\color[rgb]{0,0,0}$\tilde{A}_{n-1}$}%
}}}}
\put(-4070,-4036){\makebox(0,0)[lb]{\smash{{\SetFigFont{9}{10.8}{\familydefault}{\mddefault}{\updefault}{\color[rgb]{0,0,0}$\epsilon_1$}%
}}}}
\put(-3630,-4043){\makebox(0,0)[lb]{\smash{{\SetFigFont{9}{10.8}{\familydefault}{\mddefault}{\updefault}{\color[rgb]{0,0,0}$\epsilon_2$}%
}}}}
\put(-1313,-3834){\makebox(0,0)[lb]{\smash{{\SetFigFont{9}{10.8}{\familydefault}{\mddefault}{\updefault}{\color[rgb]{0,0,0}$4$}%
}}}}
\put(-2598,-3729){\makebox(0,0)[lb]{\smash{{\SetFigFont{12}{14.4}{\familydefault}{\mddefault}{\updefault}{\color[rgb]{0,0,0}${B}_{n}$}%
}}}}
\put(-1552,-4015){\makebox(0,0)[lb]{\smash{{\SetFigFont{9}{10.8}{\familydefault}{\mddefault}{\updefault}{\color[rgb]{0,0,0}$\epsilon_{n-1}$}%
}}}}
\put(-4047,-3090){\makebox(0,0)[lb]{\smash{{\SetFigFont{9}{10.8}{\familydefault}{\mddefault}{\updefault}{\color[rgb]{0,0,0}$\sigma_1$}%
}}}}
\put(-3607,-3097){\makebox(0,0)[lb]{\smash{{\SetFigFont{9}{10.8}{\familydefault}{\mddefault}{\updefault}{\color[rgb]{0,0,0}$\sigma_2$}%
}}}}
\put(-1529,-3069){\makebox(0,0)[lb]{\smash{{\SetFigFont{9}{10.8}{\familydefault}{\mddefault}{\updefault}{\color[rgb]{0,0,0}$\sigma_{n-1}$}%
}}}}
\put(-2575,-2783){\makebox(0,0)[lb]{\smash{{\SetFigFont{12}{14.4}{\familydefault}{\mddefault}{\updefault}{\color[rgb]{0,0,0}${A}_{n}$}%
}}}}
\put(-278,-3870){\makebox(0,0)[lb]{\smash{{\SetFigFont{9}{10.8}{\familydefault}{\mddefault}{\updefault}{\color[rgb]{0,0,0}$\tilde{\sigma}_{n-1}$}%
}}}}
\put(317,-4297){\makebox(0,0)[lb]{\smash{{\SetFigFont{9}{10.8}{\familydefault}{\mddefault}{\updefault}{\color[rgb]{0,0,0}$\tilde{\sigma}_n$}%
}}}}
\put(-1092,-4022){\makebox(0,0)[lb]{\smash{{\SetFigFont{9}{10.8}{\familydefault}{\mddefault}{\updefault}{\color[rgb]{0,0,0}$\bar{\epsilon}_n$}%
}}}}
\end{picture}%

\end{center}
\caption{Coxeter graph of type  $A_n$, $B_n$ ($n\geq 2$) and
$\tilde{A}_{n-1}$ ($n\geq 3$). Labels equal to $3$, as usual, are
not shown. Moreover, to fix notation, every vertex is labelled
with the corresponding generator in the Artin  group.} \label{fig:dinkyn}
\end{figure}


\subsection{Inclusions of Artin groups}\label{sec:inclusions}
Let $\mathrm{Br}_{n+1}:=G_{A_n}$ be the braid group on $n+1$
strands and $\mathrm{Br}_{n+1}^{n+1}<\mathrm{Br}_{n+1}$ be the
subgroup of braids fixing the $(n+1)$-st strand. The group
$\mathrm{Br}_{n+1}^{n+1}$ is called the annular braid group.
Let $K_{n+1} = \{p_1, \ldots, p_{n+1} \}$ be a set of $n+1$ distinct points in $\C$.  The classical braid group $\mathrm{Br}_{n+1} = G_{A_n}$ can be realized as the fundamental group of the space of unordered configurations of $n+1$ points in $\C$ with basepoint $K_{n+1}$ (see the left part of Figure \ref{fig:cylinder}), with $K_6 = \{1, \ldots, 6 \}$). We can now think to the subgroup $\mathrm{Br}_{n+1}^{n+1}<\mathrm{Br}_{n+1}$ as the fundamental group of the space of unordered configurations of $n$ points in $\C^*$: in fact if we take $p_{n+1}= 0$ and $p_i \in S^1 \sst \C$ for $i \in 1, \ldots, n$, since for a braid $\beta \in \mathrm{Br}_{n+1}^{n+1}$ the orbit of the $(n+1)$-st point can be thought constant, up to homotopy, we can think to $\beta$ as a braid with $n$ strands in the anulus (see the right part of Figure \ref{fig:cylinder}).
\begin{figure}[hbtp]
 \begin{center}
\begin{picture}(0,0)%
\includegraphics{fig/cilindro_ex4.pstex}%
\end{picture}%
\setlength{\unitlength}{3947sp}%
\begingroup\makeatletter\ifx\SetFigFont\undefined%
\gdef\SetFigFont#1#2#3#4#5{%
  \reset@font\fontsize{#1}{#2pt}%
  \fontfamily{#3}\fontseries{#4}\fontshape{#5}%
  \selectfont}%
\fi\endgroup%
\begin{picture}(5501,2851)(-2605,-4336)
\put(2588,-1561){\makebox(0,0)[lb]{\smash{\SetFigFont{8}{9.6}{\familydefault}{\mddefault}{\updefault}{\color[rgb]{0,0,0}$4$}%
}}}
\put(1708,-1561){\makebox(0,0)[lb]{\smash{\SetFigFont{8}{9.6}{\familydefault}{\mddefault}{\updefault}{\color[rgb]{0,0,0}$3$}%
}}}
\put(2598,-3602){\makebox(0,0)[lb]{\smash{\SetFigFont{8}{9.6}{\familydefault}{\mddefault}{\updefault}{\color[rgb]{0,0,0}$4$}%
}}}
\put(1728,-3591){\makebox(0,0)[lb]{\smash{\SetFigFont{8}{9.6}{\familydefault}{\mddefault}{\updefault}{\color[rgb]{0,0,0}$3$}%
}}}
\put(-1566,-1635){\makebox(0,0)[lb]{\smash{\SetFigFont{8}{9.6}{\familydefault}{\mddefault}{\updefault}{\color[rgb]{0,0,0}$3$}%
}}}
\put(-1041,-1636){\makebox(0,0)[lb]{\smash{\SetFigFont{8}{9.6}{\familydefault}{\mddefault}{\updefault}{\color[rgb]{0,0,0}$4$}%
}}}
\put(1640,-2238){\makebox(0,0)[lb]{\smash{\SetFigFont{8}{9.6}{\familydefault}{\mddefault}{\updefault}{\color[rgb]{0,0,0}$2$}%
}}}
\put(2655,-2238){\makebox(0,0)[lb]{\smash{\SetFigFont{8}{9.6}{\familydefault}{\mddefault}{\updefault}{\color[rgb]{0,0,0}$5$}%
}}}
\put(2200,-2428){\makebox(0,0)[lb]{\smash{\SetFigFont{8}{9.6}{\familydefault}{\mddefault}{\updefault}{\color[rgb]{0,0,0}$1$}%
}}}
\put(1634,-4113){\makebox(0,0)[lb]{\smash{\SetFigFont{8}{9.6}{\familydefault}{\mddefault}{\updefault}{\color[rgb]{0,0,0}$2$}%
}}}
\put(2200,-4284){\makebox(0,0)[lb]{\smash{\SetFigFont{8}{9.6}{\familydefault}{\mddefault}{\updefault}{\color[rgb]{0,0,0}$1$}%
}}}
\put(2639,-4116){\makebox(0,0)[lb]{\smash{\SetFigFont{8}{9.6}{\familydefault}{\mddefault}{\updefault}{\color[rgb]{0,0,0}$5$}%
}}}
\put(-2583,-1637){\makebox(0,0)[lb]{\smash{\SetFigFont{8}{9.6}{\familydefault}{\mddefault}{\updefault}{\color[rgb]{0,0,0}$1$}%
}}}
\put(-2077,-1640){\makebox(0,0)[lb]{\smash{\SetFigFont{8}{9.6}{\familydefault}{\mddefault}{\updefault}{\color[rgb]{0,0,0}$2$}%
}}}
\put(-530,-1633){\makebox(0,0)[lb]{\smash{\SetFigFont{8}{9.6}{\familydefault}{\mddefault}{\updefault}{\color[rgb]{0,0,0}$5$}%
}}}
\put(-10,-1628){\makebox(0,0)[lb]{\smash{\SetFigFont{8}{9.6}{\familydefault}{\mddefault}{\updefault}{\color[rgb]{0,0,0}$6$}%
}}}
\end{picture}
\end{center}
\caption{A braid in $\mathrm{Br}_{6}^{6}$ represented as an
annular braid on $5$ strands.} \label{fig:cylinder}
\end{figure}

It is well known that the annular braid group is  isomorphic to
the Artin  group $G_{B_n}$ of type $B_n$. For a proof of the
following Theorem see \cite{crisp} or \cite{lam}.
\begin{teo}
Let $\sigma_1, \ldots, \sigma_n$ and $\epsilon_1, \ldots, \epsilon_{n-1},\bar{\epsilon}_n$ be respectively the standard generators for
$G_{A_n}$ and  $G_{B_n}$. Then, the map
\begin{align*}
G_{B_n}&\to \mathrm{Br}_{n+1}^{n+1}<\mathrm{Br}_{n+1}\\
\epsilon_i& \mapsto  \sigma_i \qquad \textrm{for $ 1 \leq i \leq n-1$}\\
\bar{\epsilon}_n &\mapsto \sigma_n^2
\end{align*}
 is an isomorphism.\qed
\end{teo}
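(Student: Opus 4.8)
The plan is to split the statement into the routine part --- that the assignment is a well-defined homomorphism onto $\mathrm{Br}_{n+1}^{n+1}$ --- and the substantive part --- injectivity --- which I would obtain by realising both $G_{B_n}$ and $\mathrm{Br}_{n+1}^{n+1}$ as the fundamental group of one and the same space. For well-definedness one checks that $\sigma_1,\dots,\sigma_{n-1},\sigma_n^2$ satisfy the defining relations of $G_{B_n}$: the braid relations among $\sigma_1,\dots,\sigma_{n-1}$ form the $A_{n-1}$-part of the presentation, $\sigma_n^2$ commutes with $\sigma_1,\dots,\sigma_{n-2}$, and the length-$4$ relation between $\epsilon_{n-1}$ and $\bar{\epsilon}_n$ holds since $(\sigma_{n-1}\sigma_n^2)^2=(\sigma_{n-1}\sigma_n)^3=(\sigma_n\sigma_{n-1})^3=(\sigma_n^2\sigma_{n-1})^2$, using only $\sigma_{n-1}\sigma_n\sigma_{n-1}=\sigma_n\sigma_{n-1}\sigma_n$. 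Since $\sigma_1,\dots,\sigma_{n-1},\sigma_n^2$ all induce permutations of $\{1,\dots,n+1\}$ fixing $n+1$, the image lies in $\mathrm{Br}_{n+1}^{n+1}$, which is the preimage of $\mathrm{Stab}_{\Sigma_{n+1}}(n+1)\cong\Sigma_n$ under $\mathrm{Br}_{n+1}\to\Sigma_{n+1}$, hence of index $n+1$. For surjectivity I would invoke the transparent fact that the annular braid group on $n$ strands is generated by the half-twists $\sigma_1,\dots,\sigma_{n-1}$ of the inner $n$ points together with the loop $\sigma_n^2$ of the innermost point around the puncture; alternatively one can extract a generating set by Reidemeister--Schreier with the transversal $\{1,\sigma_n,\sigma_n\sigma_{n-1},\dots,\sigma_n\sigma_{n-1}\cdots\sigma_1\}$ and see that each generator is in the image.

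For injectivity, recall from the discussion preceding the statement that $\mathrm{Br}_{n+1}^{n+1}\cong\pi_1(Y_n)$, where $Y_n=\mathrm{UConf}_n(\C^*)$ is the space of unordered $n$-point configurations in $\C^*$, obtained by placing the $(n+1)$-st strand at $0$. On the other hand, let $M(B_n)=\{z\in\C^n:\ z_i\neq 0\ \text{and}\ z_i\neq\pm z_j\ \text{for}\ i\neq j\}$ be the complement of the reflection arrangement of type $B_n$, on which the Weyl group $W(B_n)=(\Z/2)^n\rtimes\Sigma_n$ acts freely. The coordinatewise squaring map $q\colon z\mapsto(z_1^2,\dots,z_n^2)$ restricts to a $2^n$-fold covering $M(B_n)\to\mathrm{Conf}_n(\C^*)$ whose deck group is the sign-change subgroup $(\Z/2)^n$, and it is equivariant for the residual $\Sigma_n$-actions; passing to quotients it induces a homeomorphism $M(B_n)/W(B_n)\xrightarrow{\ \sim\ }\mathrm{Conf}_n(\C^*)/\Sigma_n=Y_n$. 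Since $\mathrm{Conf}_n(\C^*)$ is aspherical (iterated Fadell--Neuwirth fibrations), so are $Y_n$ and $M(B_n)$; and by the standard identification of the Artin group of a Coxeter group $W$ with $\pi_1$ of the $W$-quotient of the reflection arrangement complement, $\pi_1(M(B_n)/W(B_n))\cong G_{B_n}$, the standard generators $\epsilon_1,\dots,\epsilon_{n-1},\bar{\epsilon}_n$ being represented by the customary loops that go half-way around the walls $z_1=z_2,\ \dots,\ z_{n-1}=z_n,\ z_n=0$ of a fixed chamber.

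It then remains to track these generators through the composite isomorphism $G_{B_n}\xrightarrow{\ \sim\ }\pi_1(Y_n)=\mathrm{Br}_{n+1}^{n+1}$. Fix a real basepoint $z^0$ with $z_1^0>z_2^0>\dots>z_n^0>0$, so that its chamber has the walls $z_i=z_{i+1}$ and $z_n=0$. For $i<n$, a representing loop of $\epsilon_i$ lifts to a small half-twist exchanging $z_i^0$ and $z_{i+1}^0$ about $z_i=z_{i+1}$, carried out in the region where all $z_k\neq 0$ and $q$ is a local diffeomorphism; its image under $q$, descended to $Y_n$, is the corresponding half-twist of the points $(z_i^0)^2$ and $(z_{i+1}^0)^2$, i.e. the braid generator $\sigma_i$. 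For $\bar{\epsilon}_n$, a representing loop lifts to a path in $M(B_n)$ whose last coordinate travels half-way around $0$ from $z_n^0$ to $-z_n^0$ with the others fixed; applying $q$, the last coordinate $w_n=z_n^2$ describes a \emph{full} loop around $0$, so $\bar{\epsilon}_n$ is sent to the class in $Y_n$ of the braid in which the innermost point encircles the puncture (the $(n+1)$-st strand) exactly once --- that is, to $\sigma_n^2$. Hence the composite isomorphism agrees on the generating set $\{\epsilon_1,\dots,\epsilon_{n-1},\bar{\epsilon}_n\}$ with the map in the statement, which is therefore an isomorphism.

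The hardest part will be this generator-matching: one must fix the two identifications --- the quotient description $M(B_n)/W(B_n)\cong Y_n$ and the annular description $\mathrm{Br}_{n+1}^{n+1}\cong\pi_1(Y_n)$ --- on the nose, choosing the chamber, the basepoint and the representing paths of the $\epsilon_i$ and of $\bar{\epsilon}_n$ compatibly, so that it becomes visible that it is the half-monodromy around the wall $z_n=0$ that squares, under $q$, to a full loop around the puncture, hence to $\sigma_n^2$ and not to $\sigma_n$. If one preferred to avoid any topology and argue purely algebraically, the difficulty would move to the Reidemeister--Schreier presentation of the index-$(n+1)$ subgroup $\mathrm{Br}_{n+1}^{n+1}$, which comes out with a large, redundant set of generators and relations that one must reduce by Tietze transformations to the standard presentation of $G_{B_n}$.
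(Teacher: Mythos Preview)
Your argument is correct. Note, however, that the paper does not supply its own proof of this statement: the theorem is stated with a \qed and the preceding sentence reads ``For a proof of the following Theorem see \cite{crisp} or \cite{lam}.'' So there is no in-paper proof to compare against, only the cited sources.

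That said, your approach is very much in the spirit of the paper's own heuristic discussion just before the theorem, where $\mathrm{Br}_{n+1}^{n+1}$ is identified with $\pi_1(\mathrm{UConf}_n(\C^*))$ by freezing the $(n{+}1)$-st strand at the origin. You make this rigorous via the classical squaring map $q\colon M(B_n)\to\mathrm{Conf}_n(\C^*)$, which realises $M(B_n)/W(B_n)\cong\mathrm{UConf}_n(\C^*)$ and hence gives the isomorphism $G_{B_n}\cong\pi_1(\mathrm{UConf}_n(\C^*))$ directly; the generator-tracking you describe (half-twist around $z_i=z_{i+1}$ going to $\sigma_i$, half-turn around $z_n=0$ squaring under $q$ to a full loop around the puncture, i.e.\ $\sigma_n^2$) is exactly what is needed and is correctly done. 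The verification of the $m=4$ relation via $(\sigma_{n-1}\sigma_n^2)^2=(\sigma_{n-1}\sigma_n)^3=(\sigma_n\sigma_{n-1})^3=(\sigma_n^2\sigma_{n-1})^2$ is also fine.

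The references the paper cites take somewhat different routes: Lambropoulou works essentially with the annular/solid-torus braid picture and a direct presentation, while Crisp proves a general injectivity theorem for ``LCM-homomorphisms'' between Artin groups (of which $\epsilon_i\mapsto\sigma_i$, $\bar\epsilon_n\mapsto\sigma_n^2$ is the prototypical example), using the Deligne normal form. Your hyperplane-complement argument is arguably the most transparent for this particular case, and it has the advantage of making the generator-matching geometric rather than relying on Tietze reductions or normal-form machinery.
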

Using the suggestion given by the identification with the annular
braid group, a new interesting presentation for $G_{B_n}$ can be
worked out. Let $\tau=\bar{\epsilon}_n \epsilon_{n-1} \cdots
\epsilon_2 \epsilon_1$.
\begin{figure}[htbp]
 \begin{center}
\scalebox{.7}{
\begin{picture}(0,0)%
\includegraphics{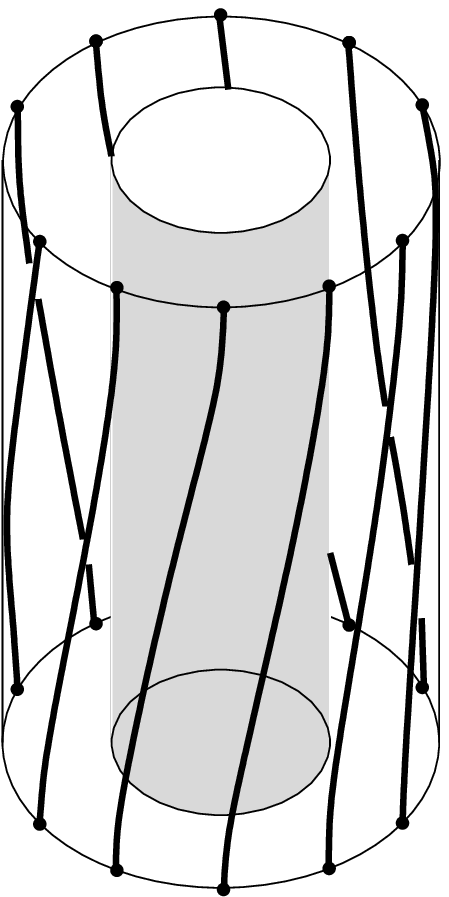}
\end{picture}%
\setlength{\unitlength}{3947sp}%
\begingroup\makeatletter\ifx\SetFigFont\undefined%
\gdef\SetFigFont#1#2#3#4#5{%
  \reset@font\fontsize{#1}{#2pt}%
  \fontfamily{#3}\fontseries{#4}\fontshape{#5}%
  \selectfont}%
\fi\endgroup%
\begin{picture}(2125,4269)(2086,-4576)
\end{picture}%
}
\end{center}
\caption{As an annular braid the element $\tau$ is obtained
turning the bottom annulus by a rotation of $2 \pi /n$.} \label{fig:tau_generator}
\end{figure}
It is easy to verify that:
$$
\tau^{-1} \epsilon_i \tau= \epsilon_{i+1} \quad \textrm{ for $1
\leq i < n-1$}
$$
i.e. conjugation by $\tau$ shifts forward the first $n-2$ standard
generators. By analogy, let  $\epsilon_n=\tau^{-1} \epsilon_{n-1}
\tau$.
We have  the following
\begin{teo}[\cite{peifer}] \label{teo:peifer}
The group $G_{B_n}$ has presentation $\langle \mathcal{G} |
\mathcal{R} \rangle$ where
\begin{align*}
\mathcal{G}=& \{ \tau, \epsilon_1, \epsilon_2, \ldots ,\epsilon_{n} \}\\
\mathcal{R}=& \{ \epsilon_i \epsilon_j=\epsilon_j \epsilon_i \quad
\textrm{for $i\neq j-1,j+1$}\} \cup\\
 {}& \{\epsilon_i \epsilon_{i+1} \epsilon_i=
\epsilon_{i+1} \epsilon_{i}
\epsilon_{i+1}\}\cup \\
{}& \{\tau^{-1} \epsilon_i \tau= \epsilon_{i+1}  \}
\end{align*}
where are all indexes should be considered modulo $n$. \qed
\end{teo}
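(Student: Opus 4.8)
The plan is a Tietze–transformation comparison between the standard Artin presentation of $G_{B_n}$, on the generators $\epsilon_1,\dots,\epsilon_{n-1},\bar\epsilon_n$, and the claimed presentation $\langle\mathcal{G}\mid\mathcal{R}\rangle$. One adds to the first presentation the new generators $\tau=\bar\epsilon_n\epsilon_{n-1}\cdots\epsilon_2\epsilon_1$ and $\epsilon_n=\tau^{-1}\epsilon_{n-1}\tau$, and conversely recovers the old generator by $\bar\epsilon_n=\tau\,\epsilon_1^{-1}\epsilon_2^{-1}\cdots\epsilon_{n-1}^{-1}$. Concretely, I would build two homomorphisms, $f\colon\langle\mathcal{G}\mid\mathcal{R}\rangle\to G_{B_n}$ and $g\colon G_{B_n}\to\langle\mathcal{G}\mid\mathcal{R}\rangle$, defined on generators by these formulas, verify that each respects the relations of the source, and check that they are mutually inverse. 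Conceptually (at least for $n\ge 3$) one may note that $\langle\mathcal{G}\mid\mathcal{R}\rangle$ is nothing but the standard presentation of the semidirect product $G_{\tilde A_{n-1}}\rtimes_\rho\Z$, where $\Z=\langle\tau\rangle$ acts on $G_{\tilde A_{n-1}}=\langle\epsilon_1,\dots,\epsilon_n\mid\text{cyclic braid relations}\rangle$ through the diagram-rotation automorphism $\rho\colon\epsilon_i\mapsto\epsilon_{i+1\bmod n}$; the theorem is then the well-known isomorphism $G_{B_n}\cong G_{\tilde A_{n-1}}\rtimes_\rho\Z$ suggested by the annular-braid pictures of Figures \ref{fig:cylinder} and \ref{fig:tau_generator}.

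For the map $f$, one must check that every relation of $\mathcal{R}$ holds in $G_{B_n}$ after substituting the above formulas. The commutations $\epsilon_i\epsilon_j=\epsilon_j\epsilon_i$ and braid relations $\epsilon_i\epsilon_{i+1}\epsilon_i=\epsilon_{i+1}\epsilon_i\epsilon_{i+1}$ among $\epsilon_1,\dots,\epsilon_{n-1}$ are already relations of $G_{B_n}$; the shift relations $\tau^{-1}\epsilon_i\tau=\epsilon_{i+1}$ for $1\le i\le n-2$ were verified above, and $\tau^{-1}\epsilon_{n-1}\tau=\epsilon_n$ is the definition of $\epsilon_n$. What genuinely remains are the ``wrap-around'' relations: $\tau^{-1}\epsilon_n\tau=\epsilon_1$, the braid relation $\epsilon_n\epsilon_1\epsilon_n=\epsilon_1\epsilon_n\epsilon_1$, and the commutations of $\epsilon_n$ with $\epsilon_2,\dots,\epsilon_{n-2}$. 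These are proved by substituting $\epsilon_n=\tau^{-1}\epsilon_{n-1}\tau$ and $\tau=\bar\epsilon_n\epsilon_{n-1}\cdots\epsilon_1$ and grinding through the $B_n$ relations (using in particular that $\bar\epsilon_n$ commutes with $\epsilon_1,\dots,\epsilon_{n-2}$ and that $\epsilon_1$ commutes with $\epsilon_3,\dots,\epsilon_{n-1}$); geometrically they are transparent, since conjugation by $\tau$ is the rotation by $2\pi/n$ of the annulus, which cyclically permutes the $n$ elementary half-twists and hence sends each defining relation to its rotated copy.

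For the map $g$, one sends $\epsilon_i\mapsto\epsilon_i$ for $1\le i\le n-1$ and $\bar\epsilon_n\mapsto\tau\,\epsilon_1^{-1}\cdots\epsilon_{n-1}^{-1}$, and checks that the defining relations of the standard $B_n$ presentation are consequences of $\mathcal{R}$: the commutations and braid relations among $\epsilon_1,\dots,\epsilon_{n-1}$ are literally in $\mathcal{R}$, while the length-four relation $\bar\epsilon_n\epsilon_{n-1}\bar\epsilon_n\epsilon_{n-1}=\epsilon_{n-1}\bar\epsilon_n\epsilon_{n-1}\bar\epsilon_n$ and the commutations of $\bar\epsilon_n$ with $\epsilon_1,\dots,\epsilon_{n-2}$ become, after substitution, word identities best verified by repeatedly applying the shift relations $\tau^{-1}\epsilon_i\tau=\epsilon_{i+1}$ to move everything into a normal form. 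Finally $f$ and $g$ are mutually inverse: on $\epsilon_1,\dots,\epsilon_{n-1}$ this is immediate; $g\circ f$ sends $\tau\mapsto\tau$ and $\epsilon_n\mapsto\tau^{-1}\epsilon_{n-1}\tau=\epsilon_n$ by the relation defining $\epsilon_n$ in $\mathcal{R}$, and $f\circ g$ sends $\bar\epsilon_n\mapsto\tau\,\epsilon_1^{-1}\cdots\epsilon_{n-1}^{-1}=\bar\epsilon_n$ by the relation $\tau=\bar\epsilon_n\epsilon_{n-1}\cdots\epsilon_1$.

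The main obstacle is exactly the package of wrap-around relations for $f$, and above all the identity $\tau^{-1}\epsilon_n\tau=\epsilon_1$ that ``closes up'' the cyclic family $\epsilon_1,\dots,\epsilon_n$. The point is that, although conjugation by $\tau$ realizes the shift $\epsilon_i\mapsto\epsilon_{i+1}$ formally for $i<n-1$, the shift on the last index is mediated by the nontrivial word $\bar\epsilon_n\epsilon_{n-1}\cdots\epsilon_1$ and must be checked honestly in $G_{B_n}$; equivalently, the content is that the normal closure of $\{\epsilon_1,\dots,\epsilon_n\}$ in $G_{B_n}$ is the \emph{full} affine Artin group $G_{\tilde A_{n-1}}$ (a Reidemeister--Schreier computation against the quotient $G_{B_n}\to\Z$, $\epsilon_i\mapsto 0$, $\tau\mapsto 1$), and not merely a proper quotient of it. Once this is established the remaining verifications are routine bookkeeping.
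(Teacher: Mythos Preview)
Your approach is correct and is precisely the Tietze-move argument that the paper attributes to the original source \cite{peifer}; the paper itself does not reprove the result but simply cites it (the \qed in the statement is the whole ``proof''). The only further comment the paper makes is in the subsequent Remark: a more concise topological alternative is available, namely exhibiting an explicit infinite cyclic covering $K(G_{\tilde A_{n-1}},1)\to K(G_{B_n},1)$, which is exactly the semidirect-product picture $G_{B_n}\cong G_{\tilde A_{n-1}}\rtimes_\rho\Z$ you invoke in your conceptual aside.
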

Letting $\tilde{\sigma}_1, \tilde{\sigma}_2, \ldots,
\tilde{\sigma}_{n}$ be the standard generator of the Artin group
of type $\tilde{A}_{n-1}$, we have the following straightforward
corollary:

\begin{cor}[\cite{peifer}]\label{cor:peifer}
The map
$$G_{\tilde{A}_{n-1}} \owns \tilde{\sigma}_i \mapsto \epsilon_i\in G_{B_n}$$
gives an isomorphism between the group $G_{\tilde{A}_{n-1}}$ and the
subgroup of $G_{B_n}$ ge\-ne\-ra\-ted by $\epsilon_1, \ldots,
\epsilon_n$. Moreover, we have a semidirect product decomposition
$G_{B_n}\cong G_{\tilde{A}_{n-1}} \rtimes \langle \tau
\rangle$.\qed
\end{cor}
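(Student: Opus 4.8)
The plan is to recognize Peifer's presentation (Theorem~\ref{teo:peifer}) as, verbatim, a presentation of a semidirect product $G_{\tilde{A}_{n-1}} \rtimes \Z$, from which both assertions drop out. First I would note that the cyclic permutation $i \mapsto i+1 \pmod n$ of the vertex set is an automorphism of the underlying Coxeter graph of $\tilde{A}_{n-1}$ (an $n$-cycle with all labels $3$, for $n\geq 3$), hence induces an automorphism $\theta$ of the Artin group with $\theta(\tilde{\sigma}_i) = \tilde{\sigma}_{i+1}$, indices mod $n$. Form $\Gamma := G_{\tilde{A}_{n-1}} \rtimes_\theta \langle t\rangle$, with $\langle t \rangle \cong \Z$ acting by $t^{-1} \tilde{\sigma}_i t = \theta(\tilde{\sigma}_i) = \tilde{\sigma}_{i+1}$.

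Next I would write down the standard presentation of $\Gamma$: adjoining to the Artin presentation of $G_{\tilde{A}_{n-1}}$ a generator $t$ together with the relations $t^{-1}\tilde{\sigma}_i t = \tilde{\sigma}_{i+1}$ (legitimate since $\theta$ sends a generator to a generator) yields $\Gamma = \langle t, \tilde{\sigma}_1, \ldots, \tilde{\sigma}_n \mid \mathcal{R}' \rangle$, where $\mathcal{R}'$ consists of the braid relations $\tilde{\sigma}_i \tilde{\sigma}_{i+1} \tilde{\sigma}_i = \tilde{\sigma}_{i+1} \tilde{\sigma}_i \tilde{\sigma}_{i+1}$ and the commutations $\tilde{\sigma}_i \tilde{\sigma}_j = \tilde{\sigma}_j \tilde{\sigma}_i$ (for $i \neq j \pm 1$), all indices mod $n$, together with $t^{-1} \tilde{\sigma}_i t = \tilde{\sigma}_{i+1}$. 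Under the correspondence $t \leftrightarrow \tau$, $\tilde{\sigma}_i \leftrightarrow \epsilon_i$ this is \emph{identical} to the presentation $\langle \mathcal{G} \mid \mathcal{R}\rangle$ of Theorem~\ref{teo:peifer}; hence the correspondence extends to an isomorphism $\Psi \colon \Gamma \to G_{B_n}$ with $\Psi(t) = \tau$ and $\Psi(\tilde{\sigma}_i) = \epsilon_i$.

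Finally I would transport the obvious structure of $\Gamma$ across $\Psi$. In $\Gamma$ the canonical copy $G_{\tilde{A}_{n-1}} \times \{1\}$ is normal, is generated by the elements corresponding to the $\tilde{\sigma}_i$, meets $\{1\}\times\langle t\rangle$ trivially, and is isomorphic to $G_{\tilde{A}_{n-1}}$ via the (injective) canonical inclusion. Applying $\Psi$, the subgroup $\langle \epsilon_1, \ldots, \epsilon_n\rangle \leq G_{B_n}$ is the isomorphic image of $G_{\tilde{A}_{n-1}} \times \{1\}$, so $\tilde{\sigma}_i \mapsto \epsilon_i$ is an isomorphism of $G_{\tilde{A}_{n-1}}$ onto it; moreover this subgroup is normal in $G_{B_n}$, $\langle \tau\rangle \cong \Z$, and $\langle \epsilon_1, \ldots, \epsilon_n\rangle \cap \langle \tau\rangle = \{1\}$, which is the asserted decomposition $G_{B_n} \cong G_{\tilde{A}_{n-1}}\rtimes\langle\tau\rangle$. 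The only points that need care --- and none is a genuine obstacle --- are the verification that $i \mapsto i+1 \pmod n$ is indeed a graph automorphism of the $n$-cycle (so that $\theta$ exists; this holds in the stated range $n \geq 3$), the bookkeeping of the ``mod $n$'' convention so that $\mathcal{R}$ restricted to the $\epsilon_i$ matches the Artin relations of $\tilde{A}_{n-1}$ exactly (including the edge joining the first and last nodes), and the innocuous choice of side in the semidirect product ($\theta$ versus $\theta^{-1}$). With these checks in hand the corollary is immediate from Theorem~\ref{teo:peifer}.
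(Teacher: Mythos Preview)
Your proposal is correct and is precisely the argument the paper leaves implicit: the paper states the corollary with a \qed\ immediately after ``straightforward corollary,'' giving no proof beyond the reference to Theorem~\ref{teo:peifer}. Your recognition of Peifer's presentation as literally a presentation of $G_{\tilde{A}_{n-1}}\rtimes_\theta\Z$ (via the cyclic graph automorphism $\theta$) is exactly the intended reading, and the remaining checks you list are routine.
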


We have thus a ``curious'' inclusion of the Artin group of
infinite type $\tilde{A}_{n-1}$ into the Artin group of finite
type ${B_n}$.

\begin{os}
The proof of Theorem \ref{teo:peifer} presented in the original
paper is algebraic and based on Tietze moves; a somewhat more
coincise proof can however be obtained by standard topological
constructions. Indeed, one can exhibit an explicit infinite cyclic
covering $K(G_{\tilde{A}_{n-1}},1)\to K(G_{B_{n}},1)$ (see
\cite{all}).
\end{os}

\subsection{${\mathbf{(q,t)}}$-weighted Poincar\'e series for
${\mathbf{B_n}}$}

For future use in cohomology computations, we are interested in a
 $(q,t)$-analog of the usual Poincar\'e series for $B_n$, that is an analog of
 the Poincar\'e series with coefficients in the ring $R=\Q[q^\pmu, t^\pmu]$ of Laurent
 polynomials. This result
 and similar ones are studied in
 \cite{reiner}, to which we refer for details. We also use classical
 results from \cite{bour,hum} without further reference.

Consider the Coxeter group $W$ of type $B_n$ with its standard
generating
reflections $s_1, s_2, \ldots, s_n$.
For $w \in W$, let $n(w)$ be the number of times $s_n$ appears in
a reduced expression for $w$. By standard facts, $n(w)$ is
well-defined.\\
We define the $(q,t)$-weighted Poincar\'e
series for the Coxeter group of type $B_n$ as the sum $$\W(q,t)=\sum_{w \in W}
q^{\ell(w)-n(w)} t^{n(w)},$$ where $\ell$ is the length function.

We recall some notation. We define the $q$-analog of the number
$m$ by the polynomial
$$
[m ]_q := 1 + q + \cdots q^{m-1} =
\frac{q^m -1}{q-1}.
$$
 Notice that $[m ] = \prod_{i \mid
m, \, i\neq 1 } \ph_m(q)$, where we denote with $\ph_m(q)$ the
$m$-th cyclotomic polynomial in the variable $q$. Moreover we
define the $q$-factorial analog $[m]_q!$ as the product
$$\prod_{i=1}^{m}[i]_q$$ and the $q$-analog of the binomial $\binom{m}{i}$
as the polynomial
$$
\qbin{m}{i}_{q}: = \frac{[m]_q!}{[i]_q!
[m-i]_q!}.
$$
We can also define the $(q,t)$-analog of an even
number
$$
[2m]_{q,t} := [m]_q (1+tq^{m-1})
$$
and of the double factorial
$$
[2m]_{q,t}!! := \prod_{i=1}^m [2i]_{q,t}\ =\ [ m ]_q!
\prod_{i=0}^{m-1} (1+tq^i).
$$
Finally, we define the polynomial
$$
\qbin{m}{i}_{q,t}': = \frac{[2m]_{q,t}!!}{[2i]_{q,t}!! [m-i]_q!} \ =
\qbin{m}{i}_{q}\prod_{j=i}^{m-1}(1+tq^j).
$$

\begin{prop}[\cite{reiner}] \label{p:qtpoincare}
$$\W(q,t)=
[2n]_{q,t}!!.$$
\end{prop}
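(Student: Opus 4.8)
The plan is to compute $\W(q,t)$ by exploiting the structure of the Coxeter group $W$ of type $B_n$ as an iterated extension, tracking the two statistics $\ell(w)-n(w)$ and $n(w)$ separately. The cleanest route is to use the standard parabolic decomposition $W = W^J \cdot W_J$ where $W_J$ is the parabolic subgroup of type $B_{n-1}$ generated by $s_1,\dots,s_{n-1}$ (which does not involve $s_n$), and $W^J$ is the set of minimal-length coset representatives for $W_J \backslash W$. Each $w \in W$ factors uniquely as $w = u \cdot v$ with $u \in W^J$, $v \in W_J$, and $\ell(w) = \ell(u) + \ell(v)$; moreover since $v \in W_J$ contains no $s_n$ at all, we get $n(w) = n(u)$ and $\ell(w) - n(w) = (\ell(u)-n(u)) + \ell(v)$. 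Hence $\W_{B_n}(q,t) = \left(\sum_{u \in W^J} q^{\ell(u)-n(u)} t^{n(u)}\right) \cdot \W_{B_{n-1}}(q,t)$, and the problem reduces to identifying the bracketed factor and running an induction on $n$ (base case $n=0$ or $n=1$ being immediate).

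The heart of the matter is therefore the explicit description of $W^J$, the minimal coset representatives for $B_{n-1}$ inside $B_n$. Using the standard combinatorial model of $W(B_n)$ as signed permutations, $W^J$ is a set of size $2n$ (the index $[W(B_n):W(B_{n-1})] = 2^n n!/(2^{n-1}(n-1)!) = 2n$), and one checks that these representatives are exactly $1$ together with the elements
$$
s_n,\ s_n s_{n-1},\ \dots,\ s_n s_{n-1}\cdots s_1,\ \text{and}\ s_n s_{n-1}\cdots s_1 s_2,\ \dots
$$
— more precisely the two "arms'' of representatives obtained by reading along the diagram. I would compute the generating function $\sum_{u\in W^J} q^{\ell(u)-n(u)} t^{n(u)}$ directly from this list: the representatives split into those not involving $s_n$ (contributing $1 + q + \cdots + q^{n-1} = [n]_q$, coming from the $W(A_{n-1})$-type chain) and those involving $s_n$ exactly once (each contributing a factor $t$ in place of one $q$, summing to $t\,q^{n-1}[n]_q$ after accounting for lengths). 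This gives the bracketed factor $[n]_q(1 + tq^{n-1}) = [2n]_{q,t}$, exactly the top factor in $[2n]_{q,t}!!= \prod_{i=1}^n[2i]_{q,t}$. Combined with the inductive hypothesis $\W_{B_{n-1}}(q,t) = [2(n-1)]_{q,t}!!$, this closes the induction.

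The main obstacle I anticipate is getting the enumeration of $W^J$ and the associated statistics exactly right: one must verify that each minimal coset representative uses $s_n$ at most once, and correctly count $\ell(u) - n(u)$ for each. This is a finite check in the signed-permutation model but it is where a sign or off-by-one error would creep in; it is essentially the assertion (attributed to \cite{reiner}) that the "$B_n$ over $B_{n-1}$'' quotient has $(q,t)$-weighted size $[2n]_{q,t}$. An alternative to the parabolic-quotient bookkeeping, which I would keep in reserve, is to use the factorization of $W(B_n)$ as the top-down tower through all parabolics $B_n \supset B_{n-1} \supset \cdots \supset B_1 \supset \{1\}$ simultaneously, yielding the product $\prod_{i=1}^n [2i]_{q,t}$ in one stroke; but the single-step induction is conceptually simplest and isolates the one genuinely content-bearing computation.
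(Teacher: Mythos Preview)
Your inductive strategy via the chain $B_n \supset B_{n-1} \supset \cdots$ is a legitimate route, but it differs from the paper's. The paper uses a \emph{single} parabolic decomposition $W = W^I \cdot W_I$ with $I = \{s_1,\ldots,s_{n-1}\}$, so that $W_I \cong A_{n-1}$ has weighted series $[n]_q!$; it then enumerates the $2^n$ minimal coset representatives $W^I$ explicitly (as products $p_{i_r}\cdots p_{i_1}$ with $p_i = s_i s_{i+1}\cdots s_n$) and shows their weighted sum is $\prod_{i=0}^{n-1}(1+tq^i)$, giving the formula in one step. Your approach trades that for a simpler per-step quotient of size $2n$ at the price of an induction; both are valid.

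There is, however, an internal inconsistency in your setup that you must fix. Under the paper's convention the special generator (the one incident to the label-$4$ edge and weighted by $t$) is $s_n$, so $\langle s_1,\ldots,s_{n-1}\rangle$ is of type $A_{n-1}$, \emph{not} $B_{n-1}$. To obtain a genuine $B_{n-1}$ parabolic you must remove $s_1$, taking $W_J = \langle s_2,\ldots,s_n\rangle$; but then $v \in W_J$ typically does contain $s_n$, so your assertion $n(w)=n(u)$ is false. The repair is harmless --- since both $\ell$ and the statistic $n$ are additive over the factorization $w=uv$, the weight $q^{\ell-n}t^n$ still multiplies, and your key identity $\W_{B_n}(q,t) = \bigl(\sum_{u\in W^J} q^{\ell(u)-n(u)}t^{n(u)}\bigr)\,\W_{B_{n-1}}(q,t)$ survives. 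With this correction the $2n$ minimal representatives are $e,\ s_1,\ s_2s_1,\ \ldots,\ s_{n-1}\cdots s_1$ together with $s_n s_{n-1}\cdots s_1,\ s_{n-1}s_n s_{n-1}\cdots s_1,\ \ldots,\ s_1\cdots s_{n-1}s_n s_{n-1}\cdots s_1$; each element of the second group uses $s_n$ exactly once, and the weighted sum is $[n]_q + tq^{n-1}[n]_q = [2n]_{q,t}$, as you want. (Your tentative listing beginning $s_n,\ s_n s_{n-1},\ldots$ belongs to the wrong quotient and would not produce $2n$ elements.)
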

\begin{proof}
Consider the parabolic subgroup $W_I$ associated to the subset of
reflections $I=\{s_1, \ldots, s_{n-1}\}$. Notice that $W_I$ is
isomorphic to the symmetric group on $n$ letters $A_{n-1}$ and
that it has index $2^n$ in $B_n$. Let $W^I$ be the set of minimal
coset representatives for $W/W_I$. Then, by multiplicative
properties on reduced expressions:
\begin{align}
\W(q,t)=&\sum_{w \in W} q^{\ell(w)-n(w)} t^{n(w)}\nonumber\\
=& \Big( \sum_{w' \in W^I} q^{\ell(w')-n(w')} t^{n(w')}\Big) \cdot
\Big( \sum_{w'' \in W_I} q^{\ell(w'')-n(w'')} t^{n(w'')}\Big).
\label{formula:1}
\end{align}
Clearly, for elements $w''\in W_I$, we have $n(w'')=0$; so the
second factor in (\ref{formula:1}) reduces to the well-known
Poincar\'e series for $A_{n-1}$:
$$
 \sum_{w'' \in W_I} q^{\ell(w'')-n(w'')} t^{n(w'')}=[n]_q!.
$$
To deal with the first factor, instead, we explicitly enumerate
the elements of $W^I$. Let $p_i=s_i s_{i+1} \cdots s_n$ for $1
\leq i \leq n$. Then, it can be easily verified that
$W^I=\{p_{i_r}p_{i_{r-1}}\cdots p_{i_2}p_{i_1}\,|\, i_1 < i_2 <
\cdots < i_{r-1}<i_r \}$. Notice that $n(p_{i_r}p_{i_{r-1}}\cdots
p_{i_2}p_{i_1})=r$ and $\ell(p_{i_r}p_{i_{r-1}}\cdots
p_{i_2}p_{i_1})=\sum_{j=1}^{r} \ell(p_{i_j})=
\sum_{j=1}^{r}(n+1-i_j)$. Thus,
$$
\sum_{w' \in W^I} q^{\ell(w')-n(w')}
t^{n(w')}=\prod_{i=0}^{n-1}(1+tq^i).
$$
Finally,
$$
\W(q,t)= \Big(\prod_{i=0}^{n-1}(1+tq^i)\Big) [n]_q!=[2n]_{q,t}!!.
$$
\end{proof}

\section{The cohomology of $G_{B_n}$} \label{s:cohomBn}

\subsection{Proof of the Main Theorem}

In this Section we prove Theorem \ref{t:cohomqt} enunciated in the
introduction. We use the notations given in the Introduction.

To perform our computation we will use the complex discovered in
\cite{S2}, \cite{DS} (notice: an equivalent complex was discovered
by different methods in \cite{Sq}), and the spectral sequence
induced by a natural filtration.

The complex that computes the cohomology of $G_{B_n}$ over
$R_{q,t}$ is given as follows (see \cite{S2}):
$$
C_n^* = \bigoplus_{\Gamma \sst I_n} R.\Gamma
$$
where $I_n$ denote the set $\{1, \ldots, n\}$ and the graduation
is given by $\mid \Gamma \mid$.

The set $I_n$ corresponds to the set of nodes of the Dynkin
diagram of $B_n$ and in particular the last element, $n$,
corresponds to the last node.

It is useful to consider also the complex $\CA_n^*$ for the
cohomology of $G_{A_n}$ on the local system $R_{q,t}$. In this
case the action associated to a standard generator is always the
$(-q)$-multiplication and so the complex $\CA_n^*$ and its
cohomology are free as $\Q[t^\pm]$-modules. The complex $\CA_n^*$
is isomorphic to $C_n^*$ as a $R$-module. In both complexes the
coboundary map is
\begin{equation} \label{e:bordo}
\delta (q,t) (\Gamma) = \sum_{j \in I_n \setminus \Gamma}
(-1)^{\sigma(j,\Gamma)} \frac{W_{\Gamma \cup \{ j \}}(q,t)} {W_{
\Gamma}(q,t)} (\Gamma \cup \{j\})
\end{equation}
where $\sigma(j, \Gamma)$ is the number of elements of $\Gamma$
that are less than $j$. In the case $A_n$ $ W_\Gamma (q,t)$ is the
Poincar\'e polynomial of the parabolic subgroup $W_\Gamma \sst
A_n$ generated by the elements in the set $\Gamma$, with weight
$-q$ for each standard generator, while in the case $B_n$ $
W_\Gamma (q,t)$ is the Poincar\'e polynomial of the parabolic
subgroup $W_\Gamma \sst B_n$ generated by the elements in the set
$\Gamma$, with weight $-q$ for the first $n-1$ generators and $-t$
for the last generator.

Using Proposition \ref{p:qtpoincare} we can give an explicit
computation of the coefficients $\frac{W_{\Gamma \cup \{ j
\}}(q,t)} {W_{ \Gamma}(q,t)}$. For any $ \Gamma \sst I_n$, let
$\overline{\Gamma}$ be the subgraph of the Dynkin diagram $B_n$
which is spanned by $\Gamma$. Recall that if $\overline{\Gamma}$
is a connected component of the Dynkin diagram of $B_n$ without
the last element, then $$W_{\Gamma}(q, t) = [ m+1 ]_q!,$$ where $m
= \mid \Gamma \mid$. If $\overline{\Gamma}$ is connected and
contains the last element of $B_n$, then
$$W_{\Gamma}(q, t) = [2m]_{q,t}!!, $$ where $m = \mid \Gamma \mid$.

If $\overline{\Gamma}$ is the union of several connected
components of the Dynkin diagram, $\overline{\Gamma} =
\overline{\Gamma}_1 \cup \cdots \cup \overline{\Gamma}_k$, then
$W_\Gamma(q,t)$ is the product $$\prod_{i=1}^k W_{\Gamma_i}(q,
t)$$ of the factors corresponding to the different components.

If $j \notin \Gamma$ we can write $\overline{\Gamma}(j)$ for the
connected component of $\overline{\Gamma \cup \{j\}}$ containing
$j$. Suppose that $m = \mid \Gamma(j) \mid$ and $i$ is the number
of elements in $\Gamma(j)$ greater than $j$. Then, if $n \in
\Gamma(j)$ we have
$$
\frac{W_{\Gamma \cup \{ j \}}(q,t)} {W_{ \Gamma}(q,t)} =
\qbin{m}{i}_{q,t}'
$$
and
$$
\frac{W_{\Gamma \cup \{ j \}}(q,t)} {W_{ \Gamma}(q,t)} =
\qbin{m+1}{i+1}_{q}
$$
otherwise.

It is convenient to represent generators $\Gamma \sst I_n$ by
their characteristic functions $I_n \to \{0,1\}$ so, simply by
strings of $0$s and $1$s of length $n$.

We define a decreasing filtration $F$ on the complex
$(C^*_n,\delta)$: $F^sC_n$ is the subcomplex generated by the
strings of type $A1^s$ (ending with a string of $s$ $1$s) and we
have the inclusions
$$
C_n = F^0C_n \tss F^1C_n \tss \cdots \tss F^nC_n = R.1^n \tss
F^{n+1}C_n = 0.
$$
We have the following isomorphism of complexes:
\begin{equation} \label{e:shift}
(F^sC_n/F^{s+1}C_n) \simeq \CA_{n-s-1} [s]
\end{equation}
where $\CA_{n-s-1}$ is the complex for $G_{A_{n-s-1}}$ and the
notation $[s]$ means that the degree is shifted by $s$. Let $E_*$
be the spectral sequence associated to the filtration $F$. The
equality \ref{e:shift} tells us how the $E_1$ term of the
spectral sequence looks like. In fact for $0 \leq s \leq n-2$ we
have
\begin{equation} \label{e:sseq1}
E_1^{s,r} = H^{r}(G_{A_{n-s-1}}, R_{q,t}) = H^{r}(G_{A_{n-s-1}},
\Q[q^\pmu]_q)[t^\pmu]
\end{equation}
since the $t$-action is trivial. For $s = n-1$ and $s = n$ the
only non trivial elements in the spectral sequence are
\begin{equation} \label{e:sseq2}
E_1^{n-1,0} = E_1^{n,0} = R.
\end{equation}

In order to prove Theorem \ref{t:cohomqt} we need to state the
following lemmas.

\begin{lem} \label{l:ideali2}
Let $I(n,k)$ be the ideal generated by the polynomials
$$
\qbin{n}{n-d}'_{q,t} \qquad\textrm{    for $d \mid n$ and $d \leq
k$}
$$
If $k \mid n$ the map
$$
\alpha_{n,k}:R/(\ph_k(q)) \to R/I(n,k-1)
$$
induced by the multiplication by $\qbin{n}{n-k}'_{q,t}$ is well
defined and is injective.
\end{lem}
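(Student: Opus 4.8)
The plan is to analyze the ring $R/I(n,k-1)$ and the multiplication map $\alpha_{n,k}$ by exploiting the explicit factorizations of the polynomials $\qbin{n}{n-d}'_{q,t}$ into cyclotomic factors in $q$ times linear factors $(1+tq^j)$. First I would unwind the definition: from the identity $\qbin{m}{i}_{q,t}' = \qbin{m}{i}_q \prod_{j=i}^{m-1}(1+tq^j)$ stated just before Proposition \ref{p:qtpoincare}, we get $\qbin{n}{n-d}'_{q,t} = \qbin{n}{d}_q \prod_{j=n-d}^{n-1}(1+tq^j)$, and $\qbin{n}{d}_q = \frac{[n]_q!}{[d]_q![n-d]_q!}$ factors as a product of cyclotomic polynomials $\ph_m(q)$ where $m$ ranges (with appropriate multiplicity) over those $m\le n$ with $\lceil n/m\rceil > \lceil d/m\rceil + \lceil (n-d)/m \rceil$, i.e. essentially over the divisors of $n$ that are $\le$ something. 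The key arithmetic fact I would isolate is: for $d\mid n$, the cyclotomic polynomial $\ph_d(q)$ divides $\qbin{n}{d}_q$ exactly once (since $d$ divides $n$ but $\lfloor d/d\rfloor + \lfloor (n-d)/d\rfloor = 1 + (n/d - 1) = n/d = \lfloor n/d \rfloor$ — wait, one must be careful: actually $\ph_d$ divides $[m]_q$ iff $d\mid m$, so the multiplicity of $\ph_d$ in $\qbin{n}{d}_q$ is $\#\{1\le i\le n: d\mid i\} - \#\{1\le i\le d: d\mid i\} - \#\{1\le i\le n-d: d\mid i\} = n/d - 1 - (n/d - 1) = 0$). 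So in fact $\ph_d(q) \nmid \qbin{n}{d}_q$; the divisibility of $\qbin{n}{n-k}'_{q,t}$ by things related to $\ph_k$ must come through the $t$-linear factors, and I would need to recheck which generators of $I(n,k-1)$ are relevant. I expect the correct statement to be that modulo $\ph_k(q)$, the generators $\qbin{n}{n-d}'_{q,t}$ for $d\mid n$, $d<k$ all vanish or become controllable, so $I(n,k-1)$ projects to a principal ideal in $R/(\ph_k(q))$, and one identifies its image.

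Concretely, the second main step is to compute $R/(\ph_k(q)) \cong \Q[q]/(\ph_k(q))\,[t^\pmu] =: L[t^\pmu]$ where $L = \Q(\zeta_k)$ is the cyclotomic field. Over this ring, each generator $\qbin{n}{n-d}'_{q,t}$ of $I(n,k-1)$ (for $d\mid n$, $d\le k-1$) becomes $\bar{c}_d \cdot \prod_{j=n-d}^{n-1}(1+\zeta_k^j t)$ for a scalar $\bar c_d \in L$ — and here the point is whether $\bar c_d = 0$. I would show: $\ph_k(q) \mid \qbin{n}{d}_q$ when... hmm, when $k\nmid$ all the relevant indices — actually $\ph_k \mid [m]_q$ iff $k\mid m$, so $\ph_k\mid \qbin{n}{d}_q$ iff $\lfloor n/k\rfloor > \lfloor d/k \rfloor + \lfloor (n-d)/k\rfloor$. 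For $d < k$ and $k\nmid n$ this can certainly happen; for $d\mid n$ with $d<k$, whether it happens is the crux. The cleanest route: reduce everything to the statement that $\qbin{n}{n-k}'_{q,t}$, viewed in $R/I(n,k-1)$, generates a free $R/(\ph_k(q))$-module — i.e., that the annihilator of the class of $\qbin{n}{n-k}'_{q,t}$ in $R/I(n,k-1)$ is exactly $(\ph_k(q))$, which gives both well-definedness (the ideal $(\ph_k(q))$ is killed) and injectivity. Well-definedness amounts to $\ph_k(q)\cdot\qbin{n}{n-k}'_{q,t} \in I(n,k-1)$; since $\ph_k(q)\qbin{n}{n-k}'_{q,t} = \ph_k(q)\qbin{n}{k}_q\prod_{j=n-k}^{n-1}(1+tq^j)$, I would hunt for a divisor $d\mid n$, $d\le k-1$, with $\qbin{n}{n-d}'_{q,t} \mid \ph_k(q)\qbin{n}{n-k}'_{q,t}$, using that $\ph_k(q)\qbin{n}{k}_q$ absorbs the missing cyclotomic factors and that $\prod_{j=n-k}^{n-1}$ contains $\prod_{j=n-d}^{n-1}$ as a sub-product for $d\le k$. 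This combinatorial divisibility comparison — matching cyclotomic multiplicities on both sides — is a finite but fiddly check; I would likely cite or reproduce the relevant computation from \cite{DPS}, which the paper says supplies "some technical results."

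For injectivity, suppose $f(q,t)\cdot\qbin{n}{n-k}'_{q,t} \in I(n,k-1)$; I must show $\ph_k(q)\mid f$. Passing to $L[t^\pmu]$ ($L=\Q(\zeta_k)$, a field so $L[t^\pmu]$ is a PID), I need: the image of $I(n,k-1)$ in $L[t^\pmu]$ does not contain $\bar f \cdot \overline{\qbin{n}{n-k}'_{q,t}}$ unless $\bar f = 0$. This reduces to showing the image of $I(n,k-1)$ in $L[t^\pmu]$ is contained in the ideal generated by $\overline{\qbin{n}{n-k}'_{q,t}} = \bar{c}_k\prod_{j=n-k}^{n-1}(1+\zeta_k^j t)$ — equivalently, for each $d\mid n$ with $d\le k-1$, that $\overline{\qbin{n}{n-d}'_{q,t}}$ is divisible in $L[t^\pmu]$ by $\prod_{j=n-k}^{n-1}(1+\zeta_k^j t)$ (up to the unit $\bar c_k$). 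Since $\overline{\qbin{n}{n-d}'_{q,t}} = \bar c_d \prod_{j=n-d}^{n-1}(1+\zeta_k^j t)$ and $d < k$ means the product $\prod_{j=n-k}^{n-1}$ is \emph{longer} than $\prod_{j=n-d}^{n-1}$, divisibility fails at the level of the $t$-degree unless $\bar c_d = 0$ in $L$. So injectivity hinges on proving $\bar c_d = 0$, i.e. $\ph_k(q) \mid \qbin{n}{d}_q$ for every $d\mid n$ with $d < k$ — and this, for $k\mid n$, should follow because then $\lfloor n/k\rfloor \ge \lfloor (n-d)/k\rfloor + 1 > \lfloor d/k\rfloor + \lfloor(n-d)/k\rfloor$ when $k\nmid d$ and $k\nmid(n-d)$; the hypothesis $k\mid n$, $d\mid n$, $d<k$ forces $k\nmid d$, and whether $k\mid (n-d)$ needs a separate small argument (if $k\mid(n-d)$ then since $k\mid n$ we'd get $k\mid d$, contradiction).

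Thus the logical skeleton is: (1) factor the generators via $\qbin{m}{i}'_{q,t} = \qbin{m}{i}_q\prod(1+tq^j)$; (2) establish the cyclotomic-multiplicity lemma $\ph_k\mid\qbin{n}{d}_q$ for $d\mid n$, $d<k$, $k\mid n$; (3) deduce that every generator of $I(n,k-1)$ other than those "at level $k$" vanishes mod $\ph_k(q)$, hence $I(n,k-1)\otimes R/(\ph_k(q))$ is principal, generated by the class of $\qbin{n}{n-k}'_{q,t}$; (4) read off well-definedness (the relation $\ph_k(q)\cdot\qbin{n}{n-k}'_{q,t}\in I(n,k-1)$, to be verified directly) and injectivity (from step (3) plus $L[t^\pmu]$ being a domain). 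The main obstacle I anticipate is step (2) together with the exact divisibility bookkeeping in step (4): one is juggling floor functions and cyclotomic multiplicities, and the $t$-linear factors $(1+tq^j)$ interact with the reduction mod $\ph_k(q)$ in a way that must be tracked carefully (in $L=\Q(\zeta_k)$ the factors $1+\zeta_k^j t$ for $j$ in an arithmetic progression can coincide, which could change degree counts). I would handle this by working out the multiplicity function $\nu_m\big(\qbin{n}{d}_q\big) = \lfloor n/m\rfloor - \lfloor d/m\rfloor - \lfloor (n-d)/m\rfloor \in\{0,1\}$ explicitly and reducing all claims to inequalities among these, which is the technical core borrowed from \cite{DPS}.
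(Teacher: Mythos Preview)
Your plan is essentially correct and rests on the same arithmetic fact as the paper: for $k\mid n$ and $d\mid n$ with $d<k$ one has $\ph_k(q)\mid\qbin{n}{d}_q$ (your floor-function argument for this is fine). The paper, however, packages the argument more cleanly in two places.

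For \emph{well-definedness}, the paper introduces the auxiliary ideal $J(n,k)\subset\Q[q^\pmu]$ generated by the $\qbin{n}{n-d}_q$ (no $t$-factors) for $d\mid n$, $d\le k$, observes that $J(n,k)$ is \emph{principal} with generator $p_{n,k}(q)=\prod_{d\mid n,\,d>k}\ph_d(q)$, and notes the containment $\prod_{i=n-k}^{n-1}(1+tq^i)\,J(n,k)\subset I(n,k)$. Well-definedness then follows in one stroke because $\ph_k(q)\qbin{n}{k}_q$ is divisible by $p_{n,k-1}(q)$. This is more robust than your plan to ``hunt for'' a single $d<k$ with $\qbin{n}{n-d}'_{q,t}\mid\ph_k(q)\qbin{n}{n-k}'_{q,t}$: a given $d$ can fail (e.g.\ $n=12$, $k=4$, $d=1$ fails at the prime $\ph_2$, since $\ph_2\mid\qbin{12}{1}_q$ but $\ph_2\nmid\ph_4\qbin{12}{4}_q$), so in general you would be forced back to an ideal-combination argument anyway.

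For \emph{injectivity}, the paper's proof is a one-liner: $\ph_k(q)$ divides every generator of $I(n,k-1)$ but does not divide $\qbin{n}{n-k}'_{q,t}$; since $\ph_k(q)$ is prime in $R$, any $f$ with $f\cdot\qbin{n}{n-k}'_{q,t}\in I(n,k-1)\subset(\ph_k)$ must lie in $(\ph_k)$. This is exactly what your reduction to $L[t^\pmu]$ (with $L=\Q(\zeta_k)$) says, once you notice that the image of $I(n,k-1)$ in $L[t^\pmu]$ is actually \emph{zero} --- not ``principal, generated by the class of $\qbin{n}{n-k}'_{q,t}$'' as your summary step (3) puts it; that class is not in $I(n,k-1)$ at all, and your own preceding paragraph had already shown all the $\bar c_d$ vanish. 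With that correction your injectivity argument coincides with the paper's.
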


\noindent {{\bf Remark. }} \ The fact that this map is well
defined will follow automatically from the general theory of
spectral sequences, as it is clear from the proof of Theorem
\ref{t:cohomqt}. However, below we prove it by other means.

\begin{proof}
Let $d,k$ be positive integers such that $d \mid n$ and $k \mid
n$. We can observe that $\ph_d(q) \mid \qbin{n}{k}_q =
\qbin{n}{n-k}_q $ if and only if $d \nmid k$. Moreover each factor
$\ph_d$ appears in $\qbin{n}{k}_q$ at most with exponent $1$.

Let $J(n,k)$ be the ideal generated by the polynomials
$\qbin{n}{n-d}_q$ for $d \mid n$ and $d \leq k$. It is easy to see
that we have the following inclusion:
$$
\prod_{i= n-k}^{n-1} (1+tq^i) J(n,k) \sst I(n,k).
$$
Moreover $J(n,k)$ is a principal ideal and is generated by the
product
$$p_{n,k}(q) = \prod_{d \mid n, k < d} \ph_d(q).$$
It follows that $\qbin{n}{n-k}_q \ph_k(q) \in J(n,k-1)$ and so
$\qbin{n}{n-k}'_{q,t} \ph_k(q) \in I(n,k-1)$. This proves that the
map $\alpha_{n,k}$ is well defined.

Now we notice that the factor $\ph_k(q)$ divides each generator
of $I(n,k-1)$, but does not divide $\qbin{n}{n-k}'_{q,t}$. This
imply that $\alpha_{n,k}$ is not the zero map and that every
polynomial in the kernel of $\alpha_{n,k}$ must be a multiple of
$\ph_k(q)$, hence the map must be injective.
\end{proof}

\begin{lem} \label{l:ideali1}
Let $I(n)$ be the ideal generated by the polynomials
$$\qbin{n}{n-d}'_{q,t} \qquad \textrm{     for $d \mid n$}$$ Then $I(n)$ is the direct
product of the ideals $I_{i,d} = (\ph_d(q), q^it+1)$ for $d \mid
n$ and $0 \leq i \leq d-2$ and of the ideal
$I_{n-1}=(q^{n-1}t+1)$. Moreover the ideals $I_{i,d}$ and
$I_{n-1}$ are pairwise coprime.
\end{lem}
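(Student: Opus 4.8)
The plan is to establish the two assertions of the lemma separately: the pairwise coprimality of the ideals $I_{i,d}$ ($d\mid n$, $0\le i\le d-2$) and $I_{n-1}$, which is elementary, and then the product formula $I(n)=\bigl(\prod_{d\mid n,\ 0\le i\le d-2}I_{i,d}\bigr)\cdot I_{n-1}$, whose reverse inclusion will be the main obstacle.

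For \emph{coprimality}, everything takes place in the P.I.D. $\Q[q^\pmu]$. The sum of two of the listed ideals contains either two distinct cyclotomic polynomials $\ph_d(q)$, $\ph_{d'}(q)$ (when the $d$-indices differ), or $\ph_d(q)$ together with a difference of the relevant linear generators, which after multiplication by the unit $t^{-1}$ equals $q^m-1$ for a suitable $m$ (namely $m=|i-i'|$ for $I_{i,d}+I_{i',d}$, and $m=n-1-i$ for $I_{i,d}+I_{n-1}$, using $d\mid n$). In the first case the $\gcd$ is $1$; in the second, the bound $0\le i\le d-2$ is precisely what forces $0<m<d$, hence $d\nmid m$ and $\gcd(\ph_d(q),q^m-1)=1$. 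Either way $1$ lies in the sum. By the Chinese Remainder Theorem, coprimality turns the product into the intersection $\bigcap I_{i,d}\cap I_{n-1}$ and yields $R/\bigl(\prod I_{i,d}\cdot I_{n-1}\bigr)\cong\bigoplus_{d,i}R/I_{i,d}\oplus R/I_{n-1}$, i.e.\ the direct sum of copies of the modules $\{d\}_i$ and $\{1\}_{n-1}$.

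For the inclusion $I(n)\subseteq\prod I_{i,d}\cdot I_{n-1}=\bigcap$, write a generator as $\qbin{n}{n-d}'_{q,t}=\qbin{n}{n-d}_q\prod_{j=n-d}^{n-1}(1+tq^j)$. Since $d\ge1$, the factor $1+tq^{n-1}=q^{n-1}t+1$ always occurs, so the generator lies in $I_{n-1}$. For $I_{i,d'}$ with $d'\mid n$: if $d'\nmid d$ then $\ph_{d'}(q)\mid\qbin{n}{n-d}_q$ by the divisibility property of $q$-binomials recalled before Lemma~\ref{l:ideali2}, so the generator lies in $(\ph_{d'}(q))\subseteq I_{i,d'}$; if $d'\mid d$, then modulo $\ph_{d'}(q)$ the $d$ consecutive exponents $n-d,\dots,n-1$ run through each residue mod $d'$ exactly $d/d'$ times, so $\prod_{j=n-d}^{n-1}(1+tq^j)\equiv\bigl(1-(-1)^{d'}t^{d'}\bigr)^{d/d'}\pmod{\ph_{d'}(q)}$, and this is divisible by $q^it+1$ modulo $\ph_{d'}(q)$ because $t=-q^{-i}$ forces $t^{d'}=(-1)^{d'}$. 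Hence every generator of $I(n)$ lies in every $I_{i,d'}$ and in $I_{n-1}$, so there is a surjection $R/I(n)\twoheadrightarrow\bigoplus_{d,i}R/I_{i,d}\oplus R/I_{n-1}$.

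The main obstacle is the reverse inclusion $\prod I_{i,d}\cdot I_{n-1}\subseteq I(n)$, which I would obtain by localizing in $q$. Note that $R/I(n)$ is finitely generated over $\Q[q^\pmu]$: it is a quotient of $R/\bigl(\prod_{j=0}^{n-1}(1+tq^j)\bigr)$, which is $\Q[q^\pmu]$-free of rank $n$. So it suffices to check that the surjection above is an isomorphism after localizing at every maximal ideal $(f(q))$ of $\Q[q^\pmu]$, since then its kernel — a finitely generated $\Q[q^\pmu]$-module with all localizations zero — vanishes. Now $1+tq^{n-1}$ divides every generator, so $I(n)\subseteq(q^{n-1}t+1)$; and whenever $f(q)$ is not one of the $\ph_e(q)$ with $e\mid n$, $e>1$, the polynomial $[n]_q=\qbin{n}{n-1}_q$ (a product of exactly those $\ph_e$) is a unit in $\Q[q^\pmu]_{(f)}$, so already $I(n)$ localizes to $(q^{n-1}t+1)$, exactly as $\prod I_{i,d}\cdot I_{n-1}$ does. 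At $(\ph_e(q))$ with $e\mid n$, $e>1$, cancelling the common factor $1+tq^{n-1}$ gives
\[
I(n)_{(\ph_e)}=\bigl(\ \prod_{j=n-e}^{n-1}(1+tq^j),\ \ph_e(q)(1+tq^{n-1})\ \bigr)=(1+tq^{n-1})\cdot\bigl(\ \prod_{j=n-e}^{n-2}(1+tq^j),\ \ph_e(q)\ \bigr);
\]
modulo $1+tq^{n-1}$ the product $\prod_{j=n-e}^{n-2}(1+tq^j)$ specializes to $\prod_{m=1}^{e-1}(1-q^{-m})$, a unit in $\Q[q^\pmu]_{(\ph_e)}$, and (using that $\prod_{j=n-e}^{n-1}(1+tq^j)$ splits into $e$ pairwise coprime linear factors over that ring) a short computation identifies $R_{(\ph_e)}/I(n)_{(\ph_e)}$, and likewise the quotient by $\prod I_{i,d}\cdot I_{n-1}$, with $\Q[q^\pmu]_{(\ph_e)}\oplus\bigl(\Q[q^\pmu]/(\ph_e)\bigr)^{e-1}$. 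As the two quotients agree after every localization and are related by a surjection whose source is a Noetherian $\Q[q^\pmu]$-module, that surjection is an isomorphism, giving $I(n)=\prod I_{i,d}\cdot I_{n-1}$. Within this last step, the delicate points are the two reductions modulo $\ph_e(q)$ and the bookkeeping of which cyclotomic factors appear in the $q$-binomials.
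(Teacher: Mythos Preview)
Your argument is essentially correct and takes a genuinely different route from the paper's. The paper proceeds by direct manipulation of the generators of $I(n)$: after factoring out $(1+tq^{n-1})$, it rewrites the remaining ideal $\widetilde{I}(n)$ with generators indexed by the divisors $n=d_1>\cdots>d_h=1$, reduces these generators by elimination so that each is a product $P_1\cdots P_{j-1}Q_j\cdots Q_{h-1}$ (with $P_i=\ph_{d_i}$ and $Q_i$ the relevant block of linear factors $1+tq^{*}$), and then applies the identity $(J,I_1I_2)=(J,I_1)(J,I_2)$ for coprime $I_1,I_2$ inductively $h-2$ times to split $\widetilde{I}(n)$ into the product $\prod_s(P_s,Q_s\cdots Q_{h-1})$, each factor of which splits further into the $I_{i,d}$. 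By contrast, you first establish coprimality of the target ideals, then check the easy inclusion $I(n)\subseteq\bigcap I_{i,d}\cap I_{n-1}$ generator by generator, and obtain the reverse inclusion by a localization argument over the PID $\Q[q^\pmu]$, comparing the quotients at each maximal ideal and invoking that a surjection of isomorphic finitely generated modules over a Noetherian ring is an isomorphism. The paper's approach is entirely elementary ideal arithmetic but requires careful bookkeeping of which cyclotomic factors divide which $q$-binomial; your approach trades that bookkeeping for a localization framework and a small amount of module theory, and is arguably more transparent once one accepts those tools.

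One small correction: in your coprimality paragraph, the claim that ``$0<m<d$'' is only valid for the pair $I_{i,d}+I_{i',d}$ (where $m=|i-i'|\le d-2$). For the pair $I_{i,d}+I_{n-1}$ you have $m=n-1-i$, which need not be $<d$; what you actually need, and what does follow from $d\mid n$ and $0\le i\le d-2$, is $m\equiv -1-i\not\equiv 0\pmod d$, hence $d\nmid m$. The conclusion stands, but the stated reason should be adjusted.
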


\begin{proof}[Proof: ]
Notice that the polynomial $(1+tq^{n-1})$ divides each generator of
the ideal $I(n)$, so we can write
$$
I(n) = (1+tq^{n-1})\wI(n)
$$
where $\wI(n)$ is the ideal generated by the polynomials
$$\wqbin{n}{n-d}'_{q,t} := \qbin{n}{n-d}'_{q,t}/(1+tq^{n-1})$$
Let $n= d_1 > \cdots > d_h = 1$ be the list of all the divisors of
$n$ in decreasing order. If we set
\begin{align*}
    P_i :=& \ph_{d_i}(q) \; \mbox{ and } \\
    Q_i :=&
\prod_{j = d_{i+1}+1}^{d_i} (1+tq^{n-j})
\end{align*}
 we can rewrite our ideal as
\begin{equation} \label{e:ideali1}
\begin{split}
    \wI(n) =& \left( \qbin{n}{n-d_h},\,
    \qbin{n}{n-d_{h-1}}Q_{h-1},\,
    \qbin{n}{n-d_{h-2}}Q_{h-2}Q_{h-1},\,
\ldots \right. \\
        {} & \left. \ldots, \qbin{n}{n-d_2}Q_2\cdots Q_{h-1}, \, Q_1 \cdots Q_{h-1}
\right) \end{split}
\end{equation}
We claim that we can reduce to the following set of generators:
\begin{equation} \label{e:ideali2}
\begin{split}
    \wI(n) =& \left( P_1 \cdots P_{h-1}, \, P_1 \cdots P_{h-2}
Q_{h-1},\, P_1\cdots P_{h-3}Q_{h-2}Q_{h-1} \ldots \right.\\
    & \left. \ldots, \, P_1Q_2 \cdots
Q_{h-1}, \, Q_1 \cdots Q_{h-1} \right)
\end{split}
\end{equation}
The first generator is the same in both equations and the $j$-th
generator in Equation \ref{e:ideali2} divides the corresponding
generator in Equation \ref{e:ideali1}. Now suppose that a factor
$\ph_m(q)$ divides $\qbin{n}{n-d_j}$ but does not divide $P_1
\cdots P_{j-1}$. We may distinguish two cases:
\begin{itemize}
    \item[i)] Suppose that $m \nmid n$.  Then we can get rid of the factor $\ph_m(q)$
in $\qbin{n}{n-d_j}$ with an opportune combination with the
polynomial $$P_1 \cdots P_{h-1}$$
    \item[ii)] Suppose $m \mid n$. Then $m = d_l$ for some
$l > j$ and we can get rid of $\ph_m(q)$ using a suitable
combination with the polynomial $$P_1 \cdots P_{l-1} Q_l \cdots
Q_{h-1}$$
\end{itemize}
We may now proceed inductively. Supposing we have already reduced
the first $j-1$ terms,  we can reduce the $j$-th term of the ideal
in Equation \ref{e:ideali1} to the corresponding term in
Equation \ref{e:ideali2}.

Now we observe that if $J, I_1, I_2$ are ideals and $I_1 + I_2 =
(1)$, then $(J, I_1I_2) = (J, I_1)(J, I_2)$. Since the polynomials
$P_i$ are all coprime, we can apply this fact to the ideal $\wI(n)$ $h-2$ times.  At the $i$-th step we set $$I_1
=(P_i),$$ $$I_2=(P_{i+1} \cdots P_{h-1}, P_{i+1} \cdots P_{h-2}
Q_{h-1}, \ldots, Q_{i+1} \cdots Q_{h-1}),$$ $$J=(Q_i\cdots
Q_{h-1}).$$ So we can factor $\wI(n)$ as
$$(P_1, Q_1\cdots Q_{h-1})(P_2 \cdots P_{h-1}, P_2\cdots P_{h-2}Q_{h-1}, Q_2\cdots Q_{h-1}) = \cdots $$
$$
\cdots = (P_1, Q_1 \cdots Q_{h-1})(P_2, Q_2 \cdots Q_{h-1})\cdots
(P_{h-1}, Q_{h-1}).
$$
Finally we can split $(P_{s}, Q_s \cdots Q_{h-1})$ as the product
$$
(P_s, 1+tq^{n-d_s})\cdots(P_s, 1+tq^{n-d_h-1}).
$$
So we have reduced the ideal $I(n)$ in the product stated in the
Lemma and it is easy to check that all the ideals of the splitting
are coprime.
\end{proof}
\begin{proof} [Proof of Theorem \ref{t:cohomqt}]
We can now prove our Theorem using the spectral sequence described
in the Equations \ref{e:sseq1} and \ref{e:sseq2}.

We introduce, as in \cite{DPS}, the following notation for the
generators of the spectral sequence:
\begin{eqnarray*}
w_h & = & 01^{h-2}0\\
z_h & = & 1^{h-1}0 + (-1)^h 01^{h-1}\\
b_h & = & 01^{h-2}\\
c_h & = & 1^{h-1}\\
z_h(i) & = & \sum_{j=0}^{i-1} (-1)^{hj}w_h^j z_h w_h^{i-j-1}\\
v_h(i) & = & \sum_{j=0}^{i-2} (-1)^{hj}w_h^j z_h w_h^{i-j-2} b_h +
(-1)^{h(i-1)} w_h^{i-1} c_h
\end{eqnarray*}

We write $\{m \}[t^\pmu]$ for the module $R/(\ph_m(q))$. The
$E_1$-term of the spectral sequence has a module $\{m \}[t^\pmu]$
in position $(s,r)$ if and only if one of the following condition
is satisfied:

a)$m \mid n-s-1$ and $ r = n-s-2 \frac{n-s-1}{m}$;

b)$m \mid n-s$ and $ r=n-s+1-2(\frac{n-s}{m})$.

Moreover we have modules $R$ in position $(n-1,0)$ and $(n,0)$. We
now look at the $d_1$ map between these two modules. Notice that $E_1^{n-1,0}$
is generated by the string $01^{n-1}$ and $E_1^{n,0}$ is generated
by the string $1^n$. Furthermore the map
$$
d_1^{n-1,0}: E_1^{n-1,0}\to E_1^{n,0}
$$
is given by the multiplication by $\qbin{n}{n-1}_{q,t}' =
[n]_q(1+tq^{n-1}) $ and is injective. It turns out that
$E_2^{n-1,0} = 0$ and $E_2^{n,0} = R/([n]_q(1+tq^{n-1}))$.
Moreover all the following terms $E_j^{n,0}$ are quotient of
$E_2^{n,0}$.

Notice that every map between modules of kind $\{m \}[t^\pmu]$ and
$\{ m'\} [t^\pmu]$ must be zero if $m \neq m'$. So we can study
our spectral sequence considering only maps between the same kind
of modules.

First let us consider an integer $m$ that doesn't divide $n$. Say
that $m \mid n+c$ with $1 \leq c < m$ and set $i = \frac{n+c}{m}$.
The modules of type $\{ m\} [t^\pmu]$ are:

$$
\begin{array}{ll}
E_1^{\lambda m-c-1,n+c-\lambda(m-2) -2i+1}
& \mbox{generated by } z_m(i-\lambda)01^{\lambda m-c-1}\\
E_1^{\lambda m-c ,n+c-\lambda(m-2) -2i+1} & \mbox{generated by }
v_m(i-\lambda)01^{\lambda m-c}
\end{array}
$$
for $\lambda = 1, \ldots, i -1$.

Here is a diagram for this case (we use the notation $\q{h}$ for $\{m \}[t^\pmu]$):
\begin{center}
\begin{tabular}{|l}
\xymatrix @R=1pc @C=1pc {
& & \q{h}\ar[r]^{d_1} & \q{h} \\
& &       &       & & & \cdots \ar[r]^{d_1} & \cdots \\
& &       &       & & &        &        & & & \q{h} \ar[r]^{d_1} & \q{h} \ar[rrrd]^{0}\\
& &       &       & & &        &        & & &                &       & & & R/I}\\
\hline
\end{tabular}
\end{center}
\vs

The map $$d_1:E_1^{\lambda m-c-1,n+c-\lambda(m-2)
-2i+1}\rightarrow E_1^{\lambda m-c, n+c-\lambda(m-2) -2i+1}$$ is
given by the multiplication by $\qbin{\lambda m - c}{\lambda m -c
-1}'_{q,t} = [\lambda m -c]_q(1+tq^{\lambda m -c -1}).$ Since
$\ph_m(q) \nmid [\lambda m -c]_q$ the map is injective and in the
$E_2$-term we have:

$$
\begin{array}{lcl}
E_2^{\lambda m-c-1,n+c-\lambda(m-2) -2i+1} & = & 0 \\
E_2^{\lambda m-c ,n+c-\lambda(m-2) -2i+1} & = & \{m\}_{\lambda m
-c -1} = \{m\}_{m-c -1}
\end{array}
$$
for $\lambda = 1, \ldots, i -1$.

The other map we have to consider is
$$d_m^{n-m, m-1}: E_m^{n-m,m-1} \to E_m^{n,0}.$$
The module $E_m^{n-m, m-1} = \{m\}_{m-c -1}$
is generated by $1^{m-1}01^{n-m}$ and so the map is the
multiplication by $\qbin{n}{n-m}'_{q,t}$. Since $(1+tq^{n-1})$
divides the coefficient $\qbin{n}{n-m}'_{q,t}$, the image of the
map $d_m^{n-m, m-1}$ must be contained in the submodule
$$(1+tq^{n-1})E_m^{n,0} = (1+tq^{n-1})R/([n]_q(1+tq^{n-1}))$$
that is in the quotient $R/([n]_q)$. Since $(\ph_m(q),[n]_q) = (1)$ (recall that $m$ does not divide $n$) there can be no nontrivial map between the modules $\{m\}_{m-c-1}$ and $R/([n]_q)$. It follows that the
differential $d_m^{n-m, m-1}$ must be zero.

As a consequence the $E_2$ part described before collapses to
$E_\infty$ and we have a copy of $\{m\}_{m-c -1}$ as a direct
summand of $H^{n-2j-1}(C_n)$ for $j= 0, \ldots, i-2$, that is for
$m \leq \frac{n}{j+1}$.

Now we consider an integer $m$ that divides $n$ and let $i =
\frac{n}{m}$. The modules of type $\{m \}[t^\pmu]$ are:

$$
\begin{array}{lll}
E_1^{\lambda m-1,n-\lambda(m-2) -2i+1}
& \mbox{generated by }& z_m(i-\lambda)01^{\lambda m-1} \mbox{ for } 1 \leq \lambda \leq i-1 \\
E_1^{\lambda m ,n-\lambda(m-2) -2i+1} & \mbox{generated by }&
v_m(i-\lambda)01^{\lambda m} \mbox{ for } 0 \leq \lambda \leq i-1.
\end{array}
$$
The situation is shown in the next diagram ($\q{h} = \{m
\}[t^\pmu]$):
\begin{center}
\begin{tabular}{|l}
\xymatrix @R=1pc @C=1pc {
\q{h}\ar[rrrd]^{d_{m-1}}\\
& & & \q{h}\ar[r]^0 & \q{h}\ar[rrrd]^{d_{m-1}} \\
& & &       &       & & & \cdots \ar[r]^0 & \cdots \ar[rrrd]^{d_{m-1}}\\
& & &       &       & & &        &        & & & \q{h} \ar[r]^0 & \q{h} \\
& & &       &       & & &        &        & & &                   & }\\
\hline
\end{tabular}
\end{center}

The map $$d_1:E_1^{\lambda m-1,n-\lambda(m-2) -2i+1} \to
E_1^{\lambda
  m, n-\lambda(m-2) -2i+1}$$ is given by the multiplication by
$\qbin{\lambda m }{\lambda m -1}'_{q,t} = [\lambda
m]_q(1+tq^{\lambda
  m-1})$, but in this case the coefficient is zero in the module $\{
m\} [t^\pmu]$ because $\ph_m(q) \mid [\lambda m]_q$ and so we have
that $E_1 = \cdots = E_{m-1}$. So we have to consider the map
$$d_{m-1}^{\lambda m, n - \lambda (m-2) - 2i +1}: E_{m-1}^{\lambda m,
  n-\lambda(m-2) -2i+1} \to E_1^{(\lambda+1) m-1,n-(\lambda+1) (m-2)
  -2i+1}$$ for $\lambda = 0, \ldots, i-2$.

This map corresponds to the multiplication by
$$\qbin{(\lambda  +1)m -1}{\lambda m }'_{q,t} = \qbin{(\lambda +1)m-1}{\lambda m}_q
\prod_{j = \lambda m +1}^{(\lambda + 1)m-1}(1+tq^{j-1}).$$ It is
easy to see that the polynomial $\qbin{(\lambda +1)m -1}{\lambda
m}_q$ is prime with the torsion $\ph_m(q)$ and so the map
$d_{m-1}^{\lambda m,
  n - \lambda (m-2) - 2i +1}$ is injective and the cokernel is
isomorphic to
$$
R \left/ \left( \ph_m(q), \prod_{j = \lambda m+1}^{(\lambda +
1)m-1}(1+tq^{j-1}) \right) \right. \simeq \bigoplus_{0 \leq k \leq m-2} \{m\}_k.
$$
As a consequence we have that
$$
\begin{array}{llcl}
E_m^{\lambda m-1,n-\lambda(m-2) -2i+1} & = &
\bigoplus_{0 \leq k \leq m-2} \{m\}_k & \mbox{ for } 1 \leq \lambda \leq i -1  \\
E_m^{\lambda m ,n-\lambda(m-2) -2i+1} & = & 0 & \mbox{ for } 0 \leq
\lambda \leq i -2.
\end{array}
$$
and all these modules collapse to $E_\infty$. This means that we
can find $\ph_m(q)$-torsion only in $H^{n-2j}(C_n)$ and for $j
\geq 1$ the summand is given by
$$
\bigoplus_{0 \leq k \leq m-2} \{m\}_k
$$
for $d \leq \frac{n}{j+1}$.

We still have to consider all the terms $E_m^{n-m,m-1} =
\{m\}[t^\pmu]$ for $m \mid n$. Here the maps we have to look at
are the following:
$$
d_m^{n-m,m-1}: E_m^{n-m,m-1} \to E_m^{n,0}.
$$
These maps correspond to multiplication by the polynomials
$\qbin{n}{n-m}'_{q,t} $. Moreover recall that $$E_1^{n,0} =
R \left/ \left( \qbin{n}{n-1}'_{q,t} \right) \right. .$$ We can now use Lemma \ref{l:ideali2} to
say that all the maps $d_m^{n-m,m-1}$ are injective and Lemma
\ref{l:ideali1} to say that
$$
E_{n+1}^{n,0} = E_\infty^{n,0} = \bigoplus_{m \mid n, 0 \leq k
\leq
  d-2} \{ m \}_k \oplus \{1 \}_{n-1}.
$$
Since $E_\infty^{n,0} = H^n(C_n)$, this complete the proof of the
Theorem.
\end{proof}

\subsection{Other computations}
We may consider the cohomology of $G_{B_n}$ over the module
$\Q[t^{\pm 1}],$ where the action is trivial for the generators
$\epsilon_1, \ldots, \epsilon_{n-1}$ and $(-t)$-multiplication for
the last generator $\overline{\epsilon}_n$. This cohomology is
computed by the complex $C_n^*$ of Section 3 where we specialize $q$
to $-1.$ So we may use similar filtration and associated spectral
sequence. We used this argument in \cite{CMS}. Here we briefly
indicate a different and more concise method, using the results of
Theorem \ref{t:cohomqt}. We have:
\begin{teo}\label{teo:ratio}
$$\begin{array}{cclc}
H^k(G_{B_n},\Q[t^{\pmu}]) & = & \Q[t^{\pmu}]/(1+t) & \quad \ 1\leq
k\leq n-1 \\
&&&\\
H^n(G_{B_n},\Q[t^{\pmu}]) & = & \Q[t^{\pmu}]/(1+t) & \quad \text{for odd $n$}\\
&&&\\
 H^n(G_{B_n},\Q[t^{\pmu}]) & = & \Q[t^{\pmu}]/(1-t^2) & \quad \text{for
even $n$.}
\end{array}$$
\end{teo}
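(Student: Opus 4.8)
The plan is to deduce Theorem~\ref{teo:ratio} from Theorem~\ref{t:cohomqt} by a base-change (specialization) argument. Recall that the complex computing $H^*(G_{B_n},\Q[t^{\pmu}])$ with the stated action is precisely $C^*_n$ of Section~\ref{s:cohomBn} after specializing $q\mapsto -1$, i.e.\ after tensoring the $R$-complex $C^*_n$ with $\Q[t^{\pmu}]$ over $R$ via the ring map $R=\Q[q^{\pmu},t^{\pmu}]\to\Q[t^{\pmu}]$, $q\mapsto -1$. Since $\Q[t^{\pmu}]$ is flat over $\Q[q^{\pmu}]$ only after we kill $q+1$, the correct statement is that there is a universal coefficient short exact sequence
\begin{equation*}
0\to H^k(G_{B_n},R_{q,t})\otimes_R \Q[t^{\pmu}] \to H^k(G_{B_n},\Q[t^{\pmu}]) \to \mathrm{Tor}^R_1\!\big(H^{k+1}(G_{B_n},R_{q,t}),\Q[t^{\pmu}]\big)\to 0,
\end{equation*}
obtained from the fact that $\Q[t^{\pmu}]=R/(q+1)$ has a length-one free resolution $0\to R\xrightarrow{q+1}R\to \Q[t^{\pmu}]\to 0$ over $R$. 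So the whole computation reduces to evaluating, for each building block $\{m\}_k=R/(\ph_m(q),q^it+1)$ appearing in Theorem~\ref{t:cohomqt}, the two groups $\{m\}_k\otimes_R\Q[t^{\pmu}]$ and $\mathrm{Tor}^R_1(\{m\}_k,\Q[t^{\pmu}])$, and likewise for $\{1\}_{n-1}$.

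The key computation is elementary: $\{m\}_k\otimes_R\Q[t^{\pmu}] = \Q[t^{\pmu}]/(\ph_m(-1),(-1)^kt+1)$. Now $\ph_m(-1)$ is a nonzero scalar unless $m$ is $1$ or a power of $2$: indeed $\ph_1(-1)=-2$, $\ph_2(-1)=0$, and $\ph_{2^a}(-1)=2$ for $a\ge 2$, while $\ph_m(-1)=1$ for $m$ not a prime power and $\ph_{p^a}(-1)=1$ for odd primes $p$. Hence $\{m\}_k\otimes_R\Q[t^{\pmu}]=0$ unless $m\in\{1,2\}$, in which case it is $\Q[t^{\pmu}]/((-1)^kt+1)$, i.e.\ $\Q[t^{\pmu}]/(1+t)$ when $k$ is even and $\Q[t^{\pmu}]/(1-t)$ when $k$ is odd. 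A parallel computation (the kernel of multiplication by $q+1$ on $\{m\}_k$) gives $\mathrm{Tor}^R_1(\{m\}_k,\Q[t^{\pmu}])$, which is again $0$ unless $m\in\{1,2\}$ and equals $\Q[t^{\pmu}]/(1+t)$ or $\Q[t^{\pmu}]/(1-t)$ accordingly. So only the summands with $m=1$ or $m=2$ survive specialization, and one simply reads off from the three cases of Theorem~\ref{t:cohomqt} which cohomological degrees receive such summands.

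Concretely: the divisors $d$ of $n$ with $d\in\{1,2\}$ contribute, in degree $n$, the summand $\{1\}_{n-1}$ always, plus $\{1\}_0$ (from $d=1$ needs $k\le d-2=-1$, so nothing) --- one must be careful here that for $d=1$ the range $0\le k\le d-2$ is empty, so $d=1$ contributes only through the explicit extra term $\{1\}_{n-1}$, while $d=2$ (when $2\mid n$) contributes $\{2\}_0$ in degree $n$ and in the even-degree tower $H^{n-2j}$. The odd-degree tower $H^{n-2j-1}$ picks up $\{d\}_{n-1}$ for $d\nmid n$, $d\le n/(j+1)$, and here only $d=2$ with $2\nmid n$ matters. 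Assembling: for $1\le k\le n-1$ exactly one of the two towers hits degree $k$ and contributes a single $\Q[t^{\pmu}]/(1+t)$ (tracking parities: $\{1\}_{n-1}$ and $\{2\}_k$-type summands evaluate to $\Q[t^{\pmu}]/(1+t)$ because the relevant index $i$ has the parity making $(-1)^it+1=t+1$; this parity bookkeeping is the one genuinely fiddly point), while in top degree $n$ one gets $\Q[t^{\pmu}]/(1+t)$ from $\{1\}_{n-1}$, and additionally $\Q[t^{\pmu}]/(1+t)$ coming from $\mathrm{Tor}$ or the $\{2\}_0$ summand exactly when $n$ is even, so that $H^n$ becomes $\Q[t^{\pmu}]/(1+t)^2\cong\Q[t^{\pmu}]/(1-t^2)$ (using $(1+t,1-t)$ are coprime up to a factor of $2$, i.e.\ $(1-t)\equiv(1+t)\cdot$unit is false but $\Q[t^{\pmu}]/(1+t)\oplus\Q[t^{\pmu}]/(1+t)$ is what one literally gets and it is isomorphic to $\Q[t^{\pmu}]/(1-t^2)$ only after checking the extension --- alternatively argue directly on the $q=-1$ complex that the class is $\Q[t^{\pmu}]/(1-t^2)$).

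\smallskip

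The main obstacle will be the last point: keeping careful track of parities of the indices $i$ in the modules $\{m\}_i$ so that the specialized relation is consistently $1+t$ (and not $1-t$), and correctly identifying the top-degree group for even $n$ --- whether it is $\Q[t^{\pmu}]/(1+t)\oplus\Q[t^{\pmu}]/(1+t)$ or the indecomposable $\Q[t^{\pmu}]/(1-t^2)$. The cleanest way to settle the latter is probably not via the universal coefficient sequence (which only gives the associated graded of the extension) but by a direct inspection of the $q\mapsto -1$ spectral sequence as in \cite{CMS}, or by noting that the $B_n$-complex specialized at $q=-1$ has a single relevant top differential which is multiplication by a polynomial congruent to $(1-t^2)$ up to units; I would cross-check the even-$n$ answer against the known value for small $n$ (e.g.\ $n=2$, where $G_{B_2}$ is the Artin group of type $B_2$) to be sure the extension is non-split.
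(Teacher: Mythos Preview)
Your approach is exactly the paper's: the short exact sequence $0\to R\xrightarrow{1+q}R\to\Q[t^{\pmu}]\to 0$ and the induced long exact sequence (equivalently, your universal coefficient sequence), followed by reading off the $(1+q)$-torsion and $(1+q)$-cotorsion of the modules in Theorem~\ref{t:cohomqt}. The paper gives only this sketch, so you are not missing any idea.

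However, your execution contains two genuine errors that you should correct rather than defer. First, the parity check you flag as ``fiddly'' is actually done wrong: for even $n$ the index $n-1$ is odd, so $\{1\}_{n-1}\otimes_R\Q[t^{\pmu}]=\Q[t^{\pmu}]/(1-t)$, \emph{not} $\Q[t^{\pmu}]/(1+t)$. Together with the $\{2\}_0$ summand (which gives $\Q[t^{\pmu}]/(1+t)$), the top cohomology for even $n$ is $\Q[t^{\pmu}]/(1+t)\oplus\Q[t^{\pmu}]/(1-t)$, and this \emph{is} isomorphic to $\Q[t^{\pmu}]/(1-t^2)$ by the Chinese Remainder Theorem, since $(1+t)+(1-t)=2$ is a unit. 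Second, your assertion $\bigl(\Q[t^{\pmu}]/(1+t)\bigr)^{2}\cong\Q[t^{\pmu}]/(1-t^2)$ is false: the left side is killed by $1+t$ while the right side is not. Finally, there is no extension problem to worry about in top degree: since $H^{n+1}=0$, the Tor term vanishes and $H^n(G_{B_n},\Q[t^{\pmu}])$ equals $H^n(G_{B_n},R_{q,t})\otimes_R\Q[t^{\pmu}]$ on the nose. In the intermediate degrees one checks that the tensor and Tor contributions never occur simultaneously, so no extension issues arise there either. With these corrections the argument goes through cleanly.
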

\bigskip

\begin{proof}[Sketch of proof] Consider the short exact sequence:
$$ 0 \to \Q[q^{\pmu},t^{\pmu}] \stackrel{1+q}{\too} \Q[q^{\pmu},t^{\pmu}] \to \Q[t^{\pmu}] \to 0 $$
and the induced long exact sequence for cohomology
$$ \cdots \! \to \! H^i(G_{B_n},\! \Q[q^{\pmu}\! ,\! t^{\pmu}])\! \stackrel{1+q}{\too}
\! H^i(G_{B_n},\! \Q[q^{\pmu} \! ,\! t^{\pmu}])\! \to \! H^i(G_{B_n}, \! \Q[t^{\pmu}])\! \to
\! \cdots .$$ The result is now a straightforward consequence of
Theorem \ref{t:cohomqt}.
\end{proof}

\section{More consequences}
By means of Shapiro's lemma (see for instance \cite{brown}), the
inclusions introduced in Section \ref{sec:inclusions} can be
exploited to link the cohomology of the Artin group of type
$\tilde{A}_{n-1}$, $A_n$ to the cohomology of $\br{B_n}$.

\subsection{Cohomology of $\br{\tilde{A}_{n-1}}$}

Let $M$ be any domain and let $q$ be a unit of $M.$  We indicate by
$M_q$ the ring $M$  with the $G_{\tilde{A}_{n-1}}$-module structure
where the action of the standard generators is given by
$(-q)$-multiplication.

\begin{prop}\label{prop:sha1}
We have
\begin{align*}
H_*(G_{\tilde{A}_{n-1}}, M_q) \cong & H_*(G_{B_{n}}, M[t^{\pm
1}]_{q,t}) \\
H^*(G_{\tilde{A}_{n-1}}, M_q) \cong & H^*(G_{B_{n}}, M[[t^{\pm
1}]]_{q,t})
\end{align*}
where the action of $G_{B_n}$ on $M[t^{\pm 1}]_{q,t}$ (and on
$M[[t^{\pm 1}]]_{q,t}$) is given by $(-q)$-mul\-ti\-pli\-ca\-tion
for the generators $\epsilon_1, \ldots, \epsilon_{n-1}$ and
$(-t)$-multiplication for the last generator $\bar{\epsilon}_n$.
\end{prop}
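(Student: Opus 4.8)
The plan is to apply Shapiro's lemma to the inclusion $G_{\tilde A_{n-1}} \hookrightarrow G_{B_n}$ of Corollary \ref{cor:peifer}. Recall that this corollary gives a semidirect product decomposition $G_{B_n} \cong G_{\tilde A_{n-1}} \rtimes \langle \tau\rangle$, so that $G_{\tilde A_{n-1}}$ sits in $G_{B_n}$ as a normal subgroup of infinite index, with quotient the infinite cyclic group $\langle\tau\rangle$. The first step is to identify the coinduced (resp.\ induced) module. For homology we use $\mathrm{Ind}_{G_{\tilde A_{n-1}}}^{G_{B_n}} M_q = M_q \otimes_{\Z G_{\tilde A_{n-1}}} \Z G_{B_n}$, and for cohomology $\mathrm{Coind}_{G_{\tilde A_{n-1}}}^{G_{B_n}} M_q = \mathrm{Hom}_{\Z G_{\tilde A_{n-1}}}(\Z G_{B_n}, M_q)$. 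Since $G_{B_n}$ is free as a right $G_{\tilde A_{n-1}}$-module on the coset representatives $\{\tau^k : k \in \Z\}$, these are $\bigoplus_{k\in\Z} M$ and $\prod_{k\in\Z} M$ respectively; the point is then to compute the residual $G_{B_n}$-action, in particular the action of $\tau$, and recognize the result as $M[t^{\pm 1}]_{q,t}$ (resp.\ $M[[t^{\pm 1}]]_{q,t}$).

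The key computation is the following. Identify $\bigoplus_{k\in\Z} M$ with $M[t^{\pm1}]$ by sending the $k$-th summand to $M\cdot t^{k}$ (and similarly $\prod_k M$ with the formal Laurent series $M[[t^{\pm1}]]$). Under the semidirect product structure, $\tau$ acts on cosets by translation $\tau^k \mapsto \tau^{k+1}$, which after the twisting built into the induced module becomes $(-t)$-multiplication on $M[t^{\pm1}]$ — this is exactly the role the generator $\bar\epsilon_n$ must play, since by Theorem \ref{teo:peifer} the element $\tau$ together with $\epsilon_1,\dots,\epsilon_n$ presents $G_{B_n}$, and $\bar\epsilon_n = \tau^{-1}\epsilon_{n-1}\cdots$ — more precisely one should trace through the relation $\bar\epsilon_n\,\epsilon_{n-1}\cdots\epsilon_1 = \tau$ to see how $\bar\epsilon_n$ acts. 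For the generators $\epsilon_i$ ($1\le i\le n-1$), which lie in $G_{\tilde A_{n-1}}$: the relation $\tau^{-1}\epsilon_i\tau = \epsilon_{i+1}$ (indices mod $n$) shows that conjugating $\epsilon_i$ into successive cosets permutes the generators $\epsilon_1,\dots,\epsilon_n$ cyclically, and since in $M_q$ \emph{every} standard generator of $G_{\tilde A_{n-1}}$ acts by $(-q)$-multiplication, the $\epsilon_i$ act diagonally on each summand by $(-q)$-multiplication, i.e.\ by $(-q)$ on all of $M[t^{\pm1}]$. This gives precisely the module structure $M[t^{\pm1}]_{q,t}$ (resp.\ $M[[t^{\pm1}]]_{q,t}$) described in the statement. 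Then Shapiro's lemma yields $H_*(G_{\tilde A_{n-1}}, M_q) \cong H_*(G_{B_n}, M[t^{\pm1}]_{q,t})$ and $H^*(G_{\tilde A_{n-1}}, M_q) \cong H^*(G_{B_n}, M[[t^{\pm1}]]_{q,t})$.

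The main obstacle — really the only non-formal point — is keeping the bookkeeping straight in the second paragraph: one must verify carefully that the residual action of $\bar\epsilon_n$ on the (co)induced module is genuinely $(-t)$-multiplication and not, say, $(-t^{-1})$ or $(-q^a t)$ for some shift, and that the actions of the $\epsilon_i$ are compatible with this across all cosets. This amounts to choosing the coset representatives $\tau^k$ consistently, writing each $\epsilon_i$ and $\bar\epsilon_n$ in the form $g\,\tau^{m}$ with $g\in G_{\tilde A_{n-1}}$ using the relations of Theorem \ref{teo:peifer}, and reading off the action from the standard formula for the (co)induced representation. Once the sign and exponent conventions are pinned down the identification of modules is immediate, and the rest of the proposition is a direct invocation of Shapiro's lemma; the distinction between the direct sum (for homology/induction) and the direct product (for cohomology/coinduction) over the infinitely many cosets accounts for the appearance of $M[t^{\pm1}]$ versus $M[[t^{\pm1}]]$.
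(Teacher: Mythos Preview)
Your approach is exactly the paper's: Shapiro's lemma applied to the inclusion of Corollary~\ref{cor:peifer}, together with the coset decomposition $G_{B_n}=\bigsqcup_{\alpha\in\Z}\tau^{\alpha}G_{\tilde A_{n-1}}$, so that the induced module is $\bigoplus_\alpha M$ and the coinduced module is $\prod_\alpha M$.

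The one place where your sketch is not quite right is the claim that $\tau$ acts by $(-t)$ ``after the twisting built into the induced module''. Under the naive identification $\tau^{\alpha}\otimes m \leftrightarrow t^{\alpha}m$ there is no such twisting: $\tau$ acts by plain $t$-multiplication, and since $\bar\epsilon_n=\tau\,\epsilon_1^{-1}\cdots\epsilon_{n-1}^{-1}$ (from $\tau=\bar\epsilon_n\epsilon_{n-1}\cdots\epsilon_1$), one finds that $\bar\epsilon_n$ acts by $t\cdot(-q)^{1-n}$, not by $(-t)$. You correctly flag this as the obstacle in your last paragraph, but you should actually resolve it. The paper does so by writing down the explicit (non-naive) $\Z[G_{B_n}]$-isomorphism
\[
\Z[G_{B_n}]\otimes_{G_{\tilde A_{n-1}}} M_q \;\longrightarrow\; M[t^{\pm 1}]_{q,t},
\qquad
\tau^{\alpha}\otimes q^{m}\;\longmapsto\;(-1)^{n\alpha}\,t^{\alpha}\,q^{(n-1)\alpha+m},
\]
whose extra factor $(-1)^{n\alpha}q^{(n-1)\alpha}$ is precisely what absorbs the $(-q)^{1-n}$ and makes $\bar\epsilon_n$ act by $(-t)$. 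Once you write this map down and check equivariance for $\epsilon_1,\dots,\epsilon_{n-1}$ and $\bar\epsilon_n$ (your argument for the $\epsilon_i$ is already correct), the proof is complete; the coinduced case is the same with $\prod$ in place of $\bigoplus$.
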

\begin{proof}
Applying Shapiro's lemma to the inclusion
$\tilde{A}_{n-1}<G_{B_{n}}$, one obtains:
\begin{align*}
H_*(G_{\tilde{A}_{n-1}}, M_q)\cong & H_*(G_{B_{n}},
\mathrm{Ind}_{G_{\tilde{A}_{n-1}}}^{G_{B_n}}M_q) \\
H^*(G_{\tilde{A}_{n-1}}, M_q)\cong & H^*(G_{B_{n}},
\mathrm{Coind}_{G_{\tilde{A}_{n-1}}}^{G_{B_{n}}}M_q).
\end{align*}
By Corollary \ref{cor:peifer}, any element of
$\mathrm{Ind}_{G_{\tilde{A}_{n-1}}}^{G_{B_n}}M_q:=\mathbb{Z}[G_{B_n}]
\otimes_{G_{\tilde{A}_{n-1}}} M_q$ can be represented as a sum of
elements of the form $\tau^{\alpha} \otimes q^m$. Now, we have an
isomorphism of $\mathbb{Z}[G_{B_n}]$-modules
\begin{align*}
\mathbb{Z}[G_{B_n}] \otimes_{G_{\tilde{A}_{n-1}}} M_q &\to
M[t^{\pm 1}]_{q,t}
\end{align*}
defined by sending $ \tau^\alpha \otimes q^m\mapsto (-1)^{n\alpha}
t^{\alpha}q^{(n-1)\alpha +m}$ and the result follows.

In cohomology we have similarly:
\begin{align*}
\mathrm{Coind}_{G_{\tilde{A}_{n-1}}}^{G_{B_n}}M_q:=\mathrm{Hom}_{G_{\tilde{A}_{n-1}}}(\mathbb{Z}[G_{B_n}],M_q)
 \cong M[[t^{\pm 1}]]_{q,t}.
\end{align*}
\end{proof}

By Propositions \ref{prop:sha1}, in order to determine the
cohomology  $H^*(G_{\tilde{A}_{n-1}}, M_q)$ it is necessary to know
the cohomology of $\br{B_n}$ with values in the module $M[[t^{\pm
1}]]$ of Laurent series in the variable $t$. The latter is linked to
the cohomology with values in the module of Laurent polynomials by:

\begin{prop}[Degree shift]\label{prop:shift}
$$
H^*(G_{B_n}, M[[t^{\pm 1 }]]_{q,t})\cong H^{*+1}(G_{B_n}, M[t^{\pm
1 }]_{q,t}).
$$
\qed
\end{prop}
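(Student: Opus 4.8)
The plan is to compare the two modules $M[[t^{\pm 1}]]$ and $M[t^{\pm 1}]$ by fitting them into a short exact sequence of $G_{B_n}$-modules together with a third, cohomologically trivial, module. Concretely, I would consider the short exact sequence
$$
0 \too M[t^{\pm 1}]_{q,t} \too M((t))_{q,t} \too M[[t^{\pm 1}]]_{q,t}/M[t^{\pm 1}]_{q,t} \too 0
$$
or, more usefully, the sequence relating Laurent polynomials, Laurent series, and the quotient $M((t))/M[[t]]$ — but the cleanest version is to use $0 \to M[t^{\pm 1}] \to M[[t^{\pm 1}]] \to M[[t^{\pm 1}]]/M[t^{\pm 1}] \to 0$, where $M[[t^{\pm1}]]$ denotes the full ring of formal Laurent-type series allowing infinitely many positive powers. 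The quotient $Q := M[[t^{\pm 1}]]/M[t^{\pm 1}]$ is, as a $\Q[t^{\pm 1}]$-module (with $t$ acting invertibly), essentially $M[t^{-1}]$-completed in a way that makes it flabby; the key point I want is that $H^*(G_{B_n}, Q) = 0$. Granting this, the long exact sequence in cohomology gives the degree-shift isomorphism $H^*(G_{B_n}, M[t^{\pm 1}]_{q,t}) \cong H^{*-1}(G_{B_n}, M[[t^{\pm 1}]]_{q,t})$, i.e. exactly the claimed statement (after reindexing).

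**The key steps**, in order, are: (1) set up the short exact sequence of $R$-modules — where $R$ acts via $(-q)$ on the first $n-1$ generators and $(-t)$ on the last — with middle or outer term a module $N$ whose $G_{B_n}$-cohomology vanishes in all degrees; (2) prove that vanishing, which is where the real work sits; (3) feed the sequence into the long exact cohomology sequence and read off that the connecting homomorphism is an isomorphism. For step (2) the natural approach is to use the finite free complex $C_n^*$ from \cite{S2} that computes $H^*(G_{B_n}, -)$ with these coefficients: since $C_n^*$ is a bounded complex of finitely generated free $R$-modules, cohomology commutes with direct limits, and $M[[t^{\pm 1}]]$ is the inverse (or a suitable direct) limit of the modules $t^{-k}M[[t]]$; one shows the relevant limit of complexes is acyclic. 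Alternatively — and perhaps more in the spirit of this paper — one observes that $M[[t^{\pm1}]] \cong \mathrm{Coind}$ or uses that $1+t$ (equivalently the action of $\bar\epsilon_n + \mathrm{id}$, since $\bar\epsilon_n$ acts by $-t$) acts invertibly on the trivial-quotient module, combined with a transfer/averaging argument over the cyclic quotient, to kill cohomology.

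**The main obstacle** will be step (2): proving that the auxiliary module has vanishing $G_{B_n}$-cohomology. The difficulty is that $M[[t^{\pm 1}]]$ is not finitely generated over $R$, so one cannot directly invoke the finiteness of $C_n^*$ without care about whether $\mathrm{Hom}$ or $\otimes$ commutes with the relevant (co)limits; for a bounded complex of finitely generated \emph{free} modules this is fine, but one must be attentive to which variance (homology vs. cohomology) and which limit (direct vs. inverse) is in play, since inverse limits are not exact in general. I expect the cleanest route is: express $M[[t^{\pm1}]]/M[t^{\pm1}]$ concretely, note that multiplication by $1+t$ (hence by the full Poincaré-type coefficients appearing in the coboundary $\delta(q,t)$ of $C_n^*$ in \eqref{e:bordo}) is bijective on it because $1+t$ is invertible in a power-series completion at $t=-1$ after localization — or simply that the complex $(C_n^* \otimes_R N, \delta)$ is contractible via an explicit homotopy built from dividing by $1+t$. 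Once acyclicity is in hand, everything else is formal homological algebra.
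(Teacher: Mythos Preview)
Your proposal contains a genuine logical error in step~(1)--(3). You settle on the short exact sequence
\[
0 \to M[t^{\pm 1}]_{q,t} \to M[[t^{\pm 1}]]_{q,t} \to Q \to 0, \qquad Q := M[[t^{\pm 1}]]/M[t^{\pm 1}],
\]
and assert that $H^*(G_{B_n},Q)=0$ gives the degree shift. It does not: when the \emph{quotient} in a short exact sequence is acyclic, the long exact sequence yields $H^i(G_{B_n},M[t^{\pm 1}])\cong H^i(G_{B_n},M[[t^{\pm 1}]])$, an isomorphism in the \emph{same} degree. To get a shift via the connecting map you need the \emph{middle} term acyclic, with $M[t^{\pm 1}]$ as the kernel and $M[[t^{\pm 1}]]$ as the cokernel. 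In fact $H^*(G_{B_n},Q)$ cannot vanish: if it did, combining with the (true) degree shift would force $H^i(G_{B_n},M[t^{\pm1}])\cong H^{i+1}(G_{B_n},M[t^{\pm1}])$ for all $i$, contradicting Theorem~\ref{t:cohomqt}.

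The paper itself offers no argument for this proposition; it simply cites \cite{C}. The approach there is precisely the one you glanced at and then discarded: one uses
\[
0 \longrightarrow M[t^{\pm1}] \longrightarrow M((t)) \oplus M((t^{-1})) \longrightarrow M[[t^{\pm1}]] \longrightarrow 0
\]
(diagonal inclusion, then difference) and shows that $H^*(G_{B_n},M((t)))=H^*(G_{B_n},M((t^{-1})))=0$. Your intuition about exploiting invertibility of $1+tq^j$ is correct but misplaced: these elements are units in $M((t))$ and in $M((t^{-1}))$ (via geometric series), which is what drives acyclicity of the middle term; they are \emph{not} invertible on $M[[t^{\pm1}]]$ or on $Q$, since for instance $(1+t)\cdot\sum_{n\in\Z}(-t)^n=0$ in $M[[t^{\pm1}]]$. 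Once you insert the Laurent-series modules in the middle position, your step~(3) is exactly right.
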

 This result  was obtained in \cite{C} in a slightly weaker form, but it is
possible to extend it to our case with little effort.

Let from now on $M=\Q[q^{\pmu}].$ In this case we have
$M[t^{\pmu}]_{q,t}= R_{q,t},$ so we obtain the cohomology of the
Artin group of affine type $\tilde{A}_{n-1}$ with $M_q-$coefficients
by means of Theorem \ref{t:cohomqt}.

In a similar way we get the rational cohomology of
$\br{\tilde{A}_{n-1}}:$

\begin{prop}\label{prop:shaq}
We have
\begin{align*}
H_*(G_{\tilde{A}_{n-1}}, \Q)\cong & H_*(G_{B_{n}}, \Q[t^{\pm 1}]) \\
H^*(G_{\tilde{A}_{n-1}}, \Q)\cong & H^*(G_{B_{n}}, \Q[[t^{\pm
1}]])
\end{align*}
where the action of $G_{B_n}$ on $\Q[t^{\pm 1}]$ (and on
$\Q[[t^{\pm 1}]]$) is trivial for the generators $\epsilon_1,
\ldots, \epsilon_{n-1}$ and $(-t)$-multiplication for the last
generator $\overline{\epsilon}_n$.
\end{prop}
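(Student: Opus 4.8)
The plan is to mimic, essentially verbatim, the proof of Proposition \ref{prop:sha1}, replacing the module $M_q$ by $\Q$ with trivial $G_{\tilde{A}_{n-1}}$-action. First I would invoke Shapiro's lemma for the inclusion $G_{\tilde{A}_{n-1}}<G_{B_n}$ supplied by Corollary \ref{cor:peifer}, which yields
\begin{align*}
H_*(G_{\tilde{A}_{n-1}},\Q) &\cong H_*\bigl(G_{B_n},\mathrm{Ind}_{G_{\tilde{A}_{n-1}}}^{G_{B_n}}\Q\bigr),\\
H^*(G_{\tilde{A}_{n-1}},\Q) &\cong H^*\bigl(G_{B_n},\mathrm{Coind}_{G_{\tilde{A}_{n-1}}}^{G_{B_n}}\Q\bigr).
\end{align*}

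Next I would identify the (co)induced modules. By the semidirect product decomposition $G_{B_n}\cong G_{\tilde{A}_{n-1}}\rtimes\langle\tau\rangle$ of Corollary \ref{cor:peifer}, the powers $\tau^\alpha$ ($\alpha\in\mathbb{Z}$) form a complete set of coset representatives, so $\mathbb{Z}[G_{B_n}]$ is free as a $\mathbb{Z}[G_{\tilde{A}_{n-1}}]$-module on $\{\tau^\alpha\}$; hence $\mathrm{Ind}_{G_{\tilde{A}_{n-1}}}^{G_{B_n}}\Q$ has $\Q$-basis $\{\tau^\alpha\otimes 1\}_{\alpha\in\mathbb{Z}}$. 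To read off the $G_{B_n}$-module structure I would use the relation $\tau^{-\alpha}\epsilon_i\tau^\alpha=\epsilon_{i+\alpha}$ (indices mod $n$) of Theorem \ref{teo:peifer} together with the triviality of the action on $\Q$: this gives $\epsilon_i(\tau^\alpha\otimes 1)=\tau^\alpha\epsilon_{i+\alpha}\otimes 1=\tau^\alpha\otimes 1$ for $1\leq i\leq n-1$, while $\bar{\epsilon}_n=\tau\epsilon_1^{-1}\cdots\epsilon_{n-1}^{-1}$ gives $\bar{\epsilon}_n(\tau^\alpha\otimes 1)=\tau^{\alpha+1}\otimes 1$. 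I would then check that the $\Q$-linear map $\tau^\alpha\otimes 1\mapsto(-t)^\alpha$ is an isomorphism of $\mathbb{Z}[G_{B_n}]$-modules onto $\Q[t^{\pm 1}]$ with $\epsilon_1,\dots,\epsilon_{n-1}$ acting trivially and $\bar{\epsilon}_n$ by $(-t)$-multiplication, which settles the homology assertion.

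For cohomology I would argue dually: a $\mathbb{Z}[G_{\tilde{A}_{n-1}}]$-linear map $\mathbb{Z}[G_{B_n}]\to\Q$ is determined by its (arbitrary) values on the $\tau^\alpha$, so $\mathrm{Coind}_{G_{\tilde{A}_{n-1}}}^{G_{B_n}}\Q\cong\prod_{\alpha\in\mathbb{Z}}\Q$, and the same computation of the action identifies this with the module $\Q[[t^{\pm 1}]]$ of formal Laurent series carrying the stated action. I do not expect a genuine obstacle here, since the argument is purely formal; the only points requiring a little care are the sign in the identification $\tau^\alpha\otimes 1\mapsto(-t)^\alpha$ — forced by the expression of $\bar{\epsilon}_n$ in terms of $\tau$, just as the factor $(-1)^{n\alpha}$ arose in Proposition \ref{prop:sha1} — and the passage from the direct sum appearing for $\mathrm{Ind}$ to the direct product appearing for $\mathrm{Coind}$, both handled exactly as in Proposition \ref{prop:sha1}.
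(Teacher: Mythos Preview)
Your proof is correct and follows exactly the approach the paper intends: the paper does not give a separate proof of this proposition, merely introducing it with ``In a similar way'' after Proposition~\ref{prop:sha1}, and your argument is precisely the specialization of that proof to the case $M_q=\Q$ with trivial action. In particular, your identification $\tau^\alpha\otimes 1\mapsto(-t)^\alpha$ is what the formula $\tau^\alpha\otimes q^m\mapsto(-1)^{n\alpha}t^\alpha q^{(n-1)\alpha+m}$ of Proposition~\ref{prop:sha1} reduces to when $q=-1$.
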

To obtain the rational cohomology of $G_{\tilde{A}_{n-1}}$ we may
 apply Proposition \ref{prop:shift} together with Theorem
 \ref{teo:ratio}.

\subsection{Cohomology of $\br{A_n}$ with coefficient in the Tong-Yang-Ma representation}

The Tong-Yang-Ma representation is an $(n+1)$-dimensional
representation of the classical braid group $\br{A_n}$ discovered
in \cite{tong}. Below we just recall it, referring to
\cite{sysoeva} for a discussion of its relevance in braid group
representation theory.

\begin{df}\label{df:tong}
Let $V$
be the free $\mathbb{Q}[u^\pmu]$-module of rank $n+1$. The
Tong-Yang-Ma representation  is the representation
$$\rho:G_{A_n} \to  \mathrm{Gl}_{\mathbb{Q}[u^\pmu]}(V) $$
defined w.r.t. the basis $e_1,  \ldots, e_{n+1}$ of $V$ by:
$$\rho (\sigma_i)=\left( \begin{array}{ccccc}
I_{i-1}&&&\\
&0&1&\\
&u&0&\\
&&&I_{n-i}
\end{array} \right)$$
where $I_j$ denote the $j$-dimensional identity matrix and all
other entries are zero.
\end{df}

 Notice that the image of the pure braid group
under the Tong-Yang-Ma representation is abelian; hence this
representation factors through the \emph{extended Coxeter group}
presented in \cite{tits}.

\begin{prop}\label{prop:sha2}
We have
$$ H_*(G_{B_n}, M[t^{\pm 1}]_{q,t})\cong H_*(G_{A_n}, M_q \otimes
V)
$$
$$
H^*(G_{B_n}, M[t^{\pm 1}]_{q,t})\cong H^*(G_{A_n}, M_q \otimes V)
$$
where each generator of $G_{A_n}$ acts on $M_q \otimes V$ by
$(-q)$-multiplication on the first factor and by the Tong-Yang-Ma
representation the second factor.
\end{prop}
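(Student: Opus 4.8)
The plan is to apply Shapiro's lemma to the inclusion $G_{B_n} \hookrightarrow \mathrm{Br}_{n+1}^{n+1} < \mathrm{Br}_{n+1} = G_{A_n}$ of Section \ref{sec:inclusions}, exactly as in the proof of Proposition \ref{prop:sha1}. Since $G_{B_n}$ has finite index $n+1$ in $G_{A_n}$ (it is the stabilizer of the $(n+1)$-st strand, so cosets correspond to the choice of endpoint of that strand), induction and coinduction from $G_{B_n}$ to $G_{A_n}$ coincide, and Shapiro gives
$$
H_*(G_{B_n}, M[t^{\pm 1}]_{q,t}) \cong H_*\!\big(G_{A_n}, \mathrm{Ind}_{G_{B_n}}^{G_{A_n}} M[t^{\pm 1}]_{q,t}\big),
$$
with the analogous statement in cohomology using $\mathrm{Coind} = \mathrm{Ind}$. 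So the whole content of the proposition is the identification of $\mathbb{Z}[G_{A_n}] \otimes_{\mathbb{Z}[G_{B_n}]} M[t^{\pm 1}]_{q,t}$ with $M_q \otimes V$ as a $\mathbb{Z}[G_{A_n}]$-module, where $V$ carries the Tong--Yang--Ma representation with $u$ specialized appropriately (one expects $u \mapsto$ a monomial in $q,t$, to be pinned down below) and $M_q$ carries the abelian $(-q)$-multiplication action.

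The key step is to choose the right set of coset representatives and track the $G_{A_n}$-action. First I would fix representatives $g_0 = 1, g_1, \ldots, g_n$ for $G_{A_n}/G_{B_n}$, for instance $g_j = \sigma_n \sigma_{n-1} \cdots \sigma_{n-j+1}$ (a braid moving the $(n+1)$-st strand past $j$ others), so that $g_j \mapsto e_{j+1}$ under the desired isomorphism; the induced module is then free with basis $g_j \otimes 1$ over $M[t^{\pm1}]$, matching $\mathrm{rank}\, V = n+1$ after the base change $M[t^{\pm1}] \cong M[u^{\pm1}]$. Then I would compute, for each standard generator $\sigma_i$ of $G_{A_n}$, the product $\sigma_i g_j$ and rewrite it as $g_{j'} h$ with $h \in G_{B_n}$, reading off how $h$ acts on $M[t^{\pm1}]_{q,t}$ (via the $(-q)$-action on $\epsilon_1,\dots,\epsilon_{n-1}$ and the $(-t)$-action on $\bar\epsilon_n$, using Theorem \ref{teo:peifer} / Corollary \ref{cor:peifer} to express $h$ in these generators, together with the relation $\bar\epsilon_n = \sigma_n^2$). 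The resulting matrices should be exactly $-q$ times the Tong--Yang--Ma matrices $\rho(\sigma_i)$ under $u = $ (the appropriate monomial), since $\rho(\sigma_i)$ permutes $e_i \leftrightarrow e_{i+1}$ up to the scalar $u$, precisely the combinatorics of "pushing the marked strand past the $i$-th crossing." The scalar bookkeeping — the powers of $-q$ and $-t$ that appear, and hence the specialization of $u$ — will need the same sign/exponent computation as in the map $\tau^\alpha \otimes q^m \mapsto (-1)^{n\alpha} t^\alpha q^{(n-1)\alpha+m}$ of Proposition \ref{prop:sha1}.

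The main obstacle is this last calculation: verifying that the coset-rewriting matrices for $\sigma_i$ coincide on the nose with $(-q)\otimes\rho(\sigma_i)$, including getting all signs and all exponents of $q$ and $t$ correct, and confirming that the off-diagonal entry of $\rho(\sigma_i)$ (the "$u$" versus "$1$" asymmetry) is reproduced by the asymmetry in how $\sigma_i$ moves the marked strand from position $i$ to $i+1$ versus from $i+1$ to $i$ (which passes through the extra generator $\bar\epsilon_n$ or the element $\tau$ a controlled number of times). Once the module isomorphism is checked on generators, functoriality of Shapiro's isomorphism in the coefficient module finishes the proof; combined with Theorem \ref{t:cohomqt}, Proposition \ref{prop:shift}, and Proposition \ref{prop:sha1} this then yields the cohomology of $\mathrm{Br}_{n+1}$ with Tong--Yang--Ma coefficients (twisted by the abelian representation $M_q$) as an explicit consequence.
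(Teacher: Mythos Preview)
Your approach is exactly the paper's: Shapiro's lemma for the finite-index inclusion $G_{B_n}<G_{A_n}$, reduction to an explicit identification of the induced module via a choice of coset representatives, and the observation that induction and coinduction coincide for finite index. The paper uses the representatives $\alpha_i=(\sigma_i\cdots\sigma_{n-1})\sigma_n(\sigma_i\cdots\sigma_{n-1})^{-1}$ for $1\leq i\leq n-1$, $\alpha_n=\sigma_n$, $\alpha_{n+1}=e$, writes out the resulting $(n+1)\times(n+1)$ matrices for each $\sigma_i$, and then, after conjugating by $\mathrm{Diag}(1,\ldots,1,-q^{-1})$, matches them with $(-q)\otimes\rho(\sigma_i)$ under the specialization $u=-q^{-2}t$---which is precisely the ``monomial in $q,t$'' you left to be determined.
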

\begin{proof}[Sketch of proof.]
For the statement in homology, by Shapiro's lemma, it is enough to
show that $\mathrm{Ind}_{G_{B_n}}^{G_{A_n}} M[t^{\pm 1 }]_{q,t}
\cong M_q \otimes
V$.\\
 Notice that $[G_{A_n}:G_{B_n}]=n+1$ and let choose as coset
 representatives for $G_{A_n}/G_{B_n}$ the
 elements $\alpha_i= (\sigma_i \sigma_{i+1} \cdots \sigma_{n-1})
 \sigma_n (\sigma_{i} \sigma_{i+1} \cdots
 \sigma_{n-1})^{-1}$ for $1 \leq i \leq n-1$, $\alpha_n=\sigma_n$,
 $\alpha_{n+1}=e$. \\
 Then by definition of induced representation, there is an isomorphism of left $G_{A_n}$-modules,
 $$\mathrm{Ind}_{G_{B_n}}^{G_{A_n}} M[t^{\pm 1
 }]_{q,t}=\bigoplus_{i=1}^{n+1} M[t^{\pm 1}]e_i$$
where the action is on the r.h.s. is as follows.
 For an element  $x\in G_{A_n}$, write $x \alpha_k= \alpha_{k'} x'$
 with $x' \in G_{B_n}$.
 Then $x$ acts on an element $r\cdot e_k \in \bigoplus_{i=1}^{n+1}
 M[t^{\pm 1}]e_i$
 as $x(r\cdot e_k)=(x'r) \cdot e_{k'}$.\\
Computing explicitly this action for the standard generators of
$G_{A_n}$, we can write the representation in the following matrix
form:
$$ \sigma_i\mapsto \left( \begin{array}{ccccc}
-qI_{i-1}&&&\\
&0&-q&\\
&q^{-1} t&0&\\
&&&-qI_{n-i}
\end{array} \right)$$
for $1\leq i \leq n-1$, whereas
$$ \sigma_n\mapsto \left( \begin{array}{ccccc}
-qI_{n-1}&&\\
&0&1&\\
& -t&0
\end{array} \right).$$
Conjugating by $U=\mathrm{Diag}(1,1, \ldots, 1,-q^{-1})$ and
setting $u=-q^{-2}t$, one obtains the desired result.\\
Finally, since $[G_{A_n}:G_{B_n}]=n+1<\infty$, the induced and
coinduced representation are isomorphic; so the analogous
statement in cohomology follows.
\end{proof}

In particular the cohomology of $\br{B_n}$ determined in Theorem
\ref{t:cohomqt} is isomorphic to the cohomology of $G_{A_n}$ with
coefficient in the Tong-Yang-Ma representation twisted by an
abelian representation. \vs

 By means of Shapiro's lemma, we may as
well determine the cohomology of $\br{A_n}$ with coefficient in
the Tong-Yang-Ma representation. Indeed:

\begin{prop}\label{prop:shaq2}
We have
\begin{align*}
    H_*(G_{B_n}, \Q[t^{\pm 1}])&\cong H_*(G_{A_n}, V)\\
    H^*(G_{B_n}, \Q[t^{\pm 1}])&\cong H^*(G_{A_n}, V)
\end{align*}
where  $V$ is the representation of $G_{A_n}$ defined in
\ref{df:tong}.
\end{prop}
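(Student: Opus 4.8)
The plan is to deduce Proposition \ref{prop:shaq2} as the $u \mapsto -1$ (equivalently $t \mapsto -1$, $q \mapsto 1$) specialization of Proposition \ref{prop:sha2}, following the same Shapiro's lemma argument but now with the trivial action of $G_{B_n}$ on $\Q[t^{\pm 1}]$ in place of the $(-q,-t)$-action on $M[t^{\pm 1}]_{q,t}$. Concretely, one repeats the computation of the induced module $\mathrm{Ind}_{G_{B_n}}^{G_{A_n}}\Q[t^{\pm 1}]$ using the same coset representatives $\alpha_i = (\sigma_i \cdots \sigma_{n-1})\sigma_n(\sigma_i \cdots \sigma_{n-1})^{-1}$ for $1 \leq i \leq n-1$, $\alpha_n = \sigma_n$, $\alpha_{n+1}=e$. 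The only difference from the proof of Proposition \ref{prop:sha2} is that the coefficient ring no longer carries a $q$-action on the $\epsilon_i$ and carries a $(-t)$-action on $\bar\epsilon_n$; tracking how the generators $\sigma_i$ permute the cosets and act on the $\Q[t^{\pm 1}]$-factor produces exactly the matrices displayed there with $q$ set to $1$.

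First I would recall that under $q=1$ the matrix for $\sigma_i$ ($1\le i\le n-1$) becomes the permutation-type matrix with a $-1$ in one off-diagonal slot and a $t$ in the mirror slot, and the matrix for $\sigma_n$ is the $2\times 2$ block $\left(\begin{smallmatrix} 0 & 1 \\ -t & 0\end{smallmatrix}\right)$ in the last two coordinates together with $-I_{n-1}$ — but wait, with the trivial action on $\Q[t^{\pm1}]$ the diagonal blocks are $+I$, not $-I$. So the correct statement is: $\sigma_i \mapsto I_{i-1} \oplus \left(\begin{smallmatrix} 0 & 1 \\ t & 0 \end{smallmatrix}\right) \oplus I_{n-i}$ for $1 \le i \le n-1$ and $\sigma_n \mapsto I_{n-1} \oplus \left(\begin{smallmatrix} 0 & 1 \\ -t & 0\end{smallmatrix}\right)$, or something of this shape up to the same diagonal conjugation $U = \mathrm{Diag}(1,\ldots,1,-q^{-1})$ used in Proposition \ref{prop:sha2}, which at $q=1$ reads $U = \mathrm{Diag}(1,\ldots,1,-1)$. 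Then I would verify that setting $u = -q^{-2}t = -t$ identifies this conjugated representation with the Tong–Yang–Ma representation $\rho$ of Definition \ref{df:tong} specialized at $u = -t$, i.e.\ with $V$ viewed as a $G_{A_n}$-module via $\rho$ after the base change $\Q[u^{\pm 1}] \to \Q[t^{\pm 1}]$, $u \mapsto -t$. Thus $\mathrm{Ind}_{G_{B_n}}^{G_{A_n}}\Q[t^{\pm 1}] \cong V$ as $G_{A_n}$-modules, and Shapiro's lemma gives the homology isomorphism; since $[G_{A_n}:G_{B_n}] = n+1 < \infty$, induction and coinduction coincide and the cohomology isomorphism follows as well.

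Alternatively — and this is the more economical route I would actually write — one observes that Proposition \ref{prop:sha2} is an isomorphism of functors in the coefficient module, so applying $- \otimes_{\Q[q^{\pm 1}]} \Q$ along $q \mapsto 1$ to both sides (equivalently, taking $M = \Q$ with trivial $q$-action) immediately yields Proposition \ref{prop:shaq2}, provided one checks that the specialization of $M_q \otimes V$ at $q=1$ is exactly $V$ with the Tong–Yang–Ma action at $u = -t$. This is the same bookkeeping as above but phrased as a base change rather than a fresh Shapiro computation.

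The main obstacle is purely a matter of sign and normalization bookkeeping: making sure that the trivial (rather than $(-1)$-scaled) action on $\Q[t^{\pm 1}]$ feeds through the coset-permutation formula $x\alpha_k = \alpha_{k'}x'$ correctly, so that one lands on the Tong–Yang–Ma matrices with the right entries and not a spurious sign twist, and confirming that the substitution $u = -t$ (the $q=1$ case of $u = -q^{-2}t$) is the correct one. There is no conceptual difficulty beyond what is already present in the proof of Proposition \ref{prop:sha2}; the argument is essentially a specialization of that one.
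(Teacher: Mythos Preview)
Your approach is correct and matches the paper's, which in fact gives no explicit proof of this proposition and leaves it as the evident specialization of Proposition~\ref{prop:sha2} (take $M=\Q$ with trivial action, so that $M_q\otimes V=V$). One bookkeeping correction: to make the $\epsilon_i$-action on $M_q$ trivial you need $-q=1$, i.e.\ the specialization is $q\mapsto -1$, not $q\mapsto 1$; with $q=-1$ the diagonal conjugation $U=\mathrm{Diag}(1,\dots,1,-q^{-1})$ becomes the identity and the induced matrices are $I_{i-1}\oplus\left(\begin{smallmatrix}0&1\\-t&0\end{smallmatrix}\right)\oplus I_{n-i}$, recovering the Tong--Yang--Ma form with $u=-t$ as you say.
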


As a consequence we have

\begin{cor} Let $V$ be the $(n+1)$-dimensional representation of the
braid group $\mathrm{Br}_{n+1}$ defined in \ref{df:tong}. Then the
cohomology
$$H^*(\mathrm{Br}_{n+1},\ V)$$
is given as in Theorem \ref{teo:ratio}.
\end{cor}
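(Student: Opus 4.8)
The plan is to combine Proposition \ref{prop:shaq2} with Theorem \ref{teo:ratio}. Proposition \ref{prop:shaq2} gives an isomorphism $H^*(G_{A_n}, V) \cong H^*(G_{B_n}, \Q[t^{\pmu}])$, where on the right $G_{B_n}$ acts on $\Q[t^{\pmu}]$ trivially through $\epsilon_1, \ldots, \epsilon_{n-1}$ and by $(-t)$-multiplication through $\bar\epsilon_n$. Since $G_{A_n}$ is by definition the braid group $\mathrm{Br}_{n+1}$, the left-hand side is precisely $H^*(\mathrm{Br}_{n+1}, V)$ with $V$ the Tong-Yang-Ma representation of Definition \ref{df:tong}. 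Hence the corollary is immediate once Theorem \ref{teo:ratio} is invoked to evaluate the right-hand side.

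So the only real content is to make sure Proposition \ref{prop:shaq2} applies verbatim. First I would note that Proposition \ref{prop:shaq2} itself follows from Proposition \ref{prop:sha2} by specializing: setting $M = \Q$ with trivial $q$-action (equivalently $q = 1$, or working with $M_q = \Q$ as a trivial module) collapses $M_q \otimes V$ to $V$ as a $G_{A_n}$-module, and collapses $M[t^{\pmu}]_{q,t}$ to $\Q[t^{\pmu}]$ with the $B_n$-action described above; the matrix identification in the proof of Proposition \ref{prop:sha2} (conjugation by $U = \mathrm{Diag}(1,\dots,1,-q^{-1})$, setting $u = -q^{-2}t$) still identifies the induced module with $V$ after the substitution, using that $[G_{A_n}:G_{B_n}] = n+1 < \infty$ so induction and coinduction agree and the cohomological statement follows from the homological one. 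This is exactly the content already recorded in Proposition \ref{prop:shaq2}, so in the write-up one simply cites it.

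The main (and essentially only) obstacle is bookkeeping of the module structure: one must check that the trivial/($-t$)-action on $\Q[t^{\pmu}]$ used in Theorem \ref{teo:ratio} is literally the one produced by Proposition \ref{prop:shaq2}, and that the variable conventions match. Given that Theorem \ref{teo:ratio} is stated precisely for $\Q[t^{\pmu}]$ with trivial action of $\epsilon_1,\dots,\epsilon_{n-1}$ and $(-t)$-multiplication by $\bar\epsilon_n$, there is nothing further to verify. The conclusion is then that
$$
H^k(\mathrm{Br}_{n+1}, V) = \Q[t^{\pmu}]/(1+t) \quad \text{for } 1 \le k \le n-1,
$$
$$
H^n(\mathrm{Br}_{n+1}, V) = \Q[t^{\pmu}]/(1+t) \quad \text{for odd } n, \qquad
H^n(\mathrm{Br}_{n+1}, V) = \Q[t^{\pmu}]/(1-t^2) \quad \text{for even } n,
$$
and $H^k(\mathrm{Br}_{n+1}, V) = 0$ otherwise, i.e. exactly the answer of Theorem \ref{teo:ratio}. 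I would keep the proof to two or three lines: cite Proposition \ref{prop:shaq2} to reduce to $H^*(G_{B_n}, \Q[t^{\pmu}])$ and then quote Theorem \ref{teo:ratio}.
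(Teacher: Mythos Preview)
Your proposal is correct and follows exactly the paper's approach: the corollary is stated with no proof beyond the phrase ``As a consequence we have'', i.e.\ one simply combines Proposition \ref{prop:shaq2} with Theorem \ref{teo:ratio}. (One small quibble in your side remark: to obtain the trivial $q$-action from $M_q$ one specializes $q=-1$, since the action is by $(-q)$-multiplication, not $q=1$; but this does not affect your main argument, which cites Proposition \ref{prop:shaq2} directly.)
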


\begin{os}
In particular the homology of $G_{\tilde{A}_{n-1}}$ with trivial
coefficients is isomorphic to the homology of $G_{A_n}$ with
coefficients in the Tong-Yang-Ma representation.
\end{os}
\section{Related topological constructions}
We refer to \cite{CMS} for the few changes  which have to be done to
the construction given in \cite{S2} (see also \cite{S1}) for
non-finite type Artin groups (but still finitely generated). We
obtain a {\it finite} CW-complex $X_W$, explicitly described,  which
is a deformation retract of the {\it orbit space} of the Artin
group. The latter is defined as the quotient space

$$\mathbf{M}({\mathcal A})_W:= \mathbf{M}({\mathcal A})/W.$$
where

$$\mathbf{M}({\mathcal A})\ :=\ [U^0\ +\ i\R^n]\ \setminus\ \bigcup_{H\in {\mathcal
   A}}\ H_{\C}$$
$U^0\subset \R^n$ being the interior part of the {\it Tits cone} of
$W,$ while $\mathcal A$ is the hyperplane arrangement of $W.$ The
associated Artin group $G_W$ is the fundamental group of the orbit
space (see \cite{bour,vin, Br, Ng, van}).

The simplest way to realize $X_W$ is by taking one point $x_0$
inside a {\it chamber} $C_0$ and, for any maximal subset $J\subset
S$ such that the parabolic subgroup $W_J$ is finite, construct a
$|J|$-cell (a polyhedron) in $U^0$ as the ``convex hull'' of the
$W_J$-orbit of $x_0$ in $\R^n.$ So, we obtain a finite cell complex
which is the union of (in general, different dimensional) polyhedra.
Next, there are identifications on the faces of these polyhedra,
which are the same as described in \cite{S2} for the finite case.
The resulting quotient space is a CW-complex $X_W$  which has a
$|J|$-cell for each $J\subset S$ such that $W_J$ is finite. We show
an example in the case $\tilde{A}_2$ (fig. \ref{fig:affine}).

\begin{figure}[hbtp]
\begin{center}
\scalebox{.8}{\input{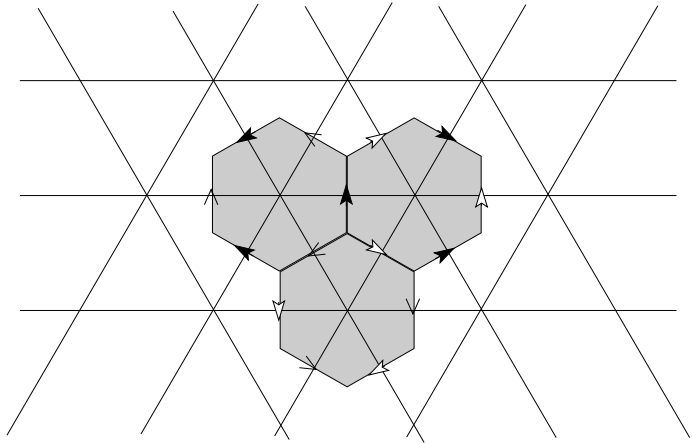}} \caption{{\it the
space} $K(G_{\tilde{A_2}},1)$ {\it is given as union of $3$
  exagons with edges glued according to the arrows (there are: $1$ 0-cell, $3$
  1-cells, $3$ 2-cells in the quotient).}}
\label{fig:affine}
\end{center}
\end{figure}

\begin{os} When $W$ is an affine group, the orbit space is known
to be a $K(\pi,1)$ for types $\tilde{A}_n,\ \tilde{C}_n$ (see
\cite{Ok,CP}) and recently for type $\tilde{B}_n$ (\cite{CMS1}); see
\cite{CD} for further classes.
\end{os}

\begin{os} The standard presentation for $G_W$ is quite easy to
  derive from the topological description of $X_W$; we may thus recover Van del Lek's result \cite{van}.
\end{os}

It follows

\begin{prop} Let $K_W^{fin}:=\{J\subset S\ :\ |W_J|<\infty\}$ with the
  natural structure of simplicial complex. Then the Euler
  characteristic of the orbit space (so, of the
  group $G_W$ when such space is of type $K(\pi,1)$)) equals
$$\chi(K_W^{fin}). $$
If \ $W$\  is affine of rank \ $n+1$\  we have
$$\chi(\mathbf{M}({\mathcal A})_W)\ =\ \chi(K_W^{fin})=\  1-\chi(S^{n-1})\ =\ (-1)^n $$
If $W$ is $\mathrm {two-dimensional}$ (so, all 3-subsets of $S$
generate an infinite group) of rank $n$ then
$$\chi(\mathbf{M}({\mathcal A})_W)\ =  1-n+ m$$
where $ m$ is the number of pairs in $S$ having finite weight
($m=\frac{n(n-1)}{2}$ if there are no $\infty$-edges in the Coxeter
graph).
\end{prop}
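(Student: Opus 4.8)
The statement has three parts, but they all follow from a single idea: the finite CW-complex $X_W$ is a deformation retract of the orbit space $\mathbf{M}(\mathcal A)_W$, and it has exactly one cell of dimension $|J|$ for each $J\subset S$ with $W_J$ finite. So $\chi(\mathbf{M}(\mathcal A)_W)=\chi(X_W)=\sum_{J:\,|W_J|<\infty}(-1)^{|J|}$, and the right-hand side is precisely $\chi(K_W^{fin})$ once we orient the simplicial complex $K_W^{fin}$ so that the empty set contributes $+1$ (equivalently, $\chi(K_W^{fin}) = 1 + \widetilde\chi(K_W^{fin})$, the reduced Euler characteristic shifted by the vertex $\emptyset$). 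This first identity is immediate from the cellular description recalled just above the statement, so I would dispatch it in one line.

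\textbf{The affine case.} Suppose $W$ is affine of rank $n+1$, so $|S|=n+1$. The key fact is that a proper subset $J\subsetneq S$ generates a \emph{finite} Coxeter group (it is a proper sub-diagram of an affine diagram, hence of finite type), while $W_S=W$ itself is infinite. Therefore $K_W^{fin}$ consists of \emph{all} proper subsets of $S$, i.e. $K_W^{fin}$ is the boundary complex of the $n$-simplex on vertex set $S$. Its geometric realization is $S^{n-1}$, so $\chi(K_W^{fin}) = \chi(\text{full simplex}) - (-1)^{|S|} = 1 - (-1)^{n+1} = 1 - \chi(S^{n-1})$ — wait, more cleanly: $\chi(\partial\Delta^n) = \chi(S^{n-1})$ if we use reduced conventions, but with our convention ($\emptyset$ counted) $\chi(K_W^{fin}) = \sum_{J\subsetneq S}(-1)^{|J|} = \big(\sum_{J\subseteq S}(-1)^{|J|}\big) - (-1)^{n+1} = 0 - (-1)^{n+1} = (-1)^n$. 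This simultaneously gives $\chi(K_W^{fin}) = 1-\chi(S^{n-1})$ since $\chi(S^{n-1}) = 1+(-1)^{n-1} = 1-(-1)^n$. No obstacle here; it is pure bookkeeping once one knows that exactly the proper subsets are of finite type.

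\textbf{The two-dimensional case.} Now suppose every $3$-element subset of $S$ generates an infinite group, with $|S|=n$. Then the finite-type subsets $J$ are: $\emptyset$ (contributing $+1$), all $n$ singletons (each $W_{\{s\}}\cong\Z/2$ is finite, contributing $-1$ each), and those $2$-element subsets $\{s,t\}$ with $m(s,t)<\infty$ (each such $W_{\{s,t\}}$ is a finite dihedral group, contributing $+1$ each); call the number of these $m$. No larger subset is finite-type by hypothesis. Hence $\chi(K_W^{fin}) = 1 - n + m$, and if the Coxeter graph has no $\infty$-labelled edges then every pair has finite weight so $m = \binom{n}{2} = \frac{n(n-1)}{2}$.

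\textbf{Main obstacle.} There is essentially no hard computation; the only thing that needs care is the input fact — that $X_W$ (as constructed in \cite{S2}, \cite{CMS}) really is a deformation retract of $\mathbf{M}(\mathcal A)_W$ and has precisely one $|J|$-cell per finite-type $J\subseteq S$ — and the elementary combinatorial classification of which parabolic subgroups of an affine (resp. two-dimensional) Coxeter group are finite. The former is quoted from the cited papers; the latter is the standard fact that proper sub-diagrams of connected affine Dynkin diagrams are of finite type, plus the definition that "two-dimensional" means no rank-$3$ parabolic is finite. I would therefore structure the proof as: (i) reduce $\chi(\mathbf{M}(\mathcal A)_W)$ to $\chi(K_W^{fin})$ via the CW-structure of $X_W$; (ii) identify $K_W^{fin}$ in each of the two cases and read off the alternating sum. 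The write-up is short; the subtlety is purely in invoking the right structural results, not in any estimate or construction.
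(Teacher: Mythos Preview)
Your argument is correct and is exactly the approach the paper has in mind: the paper's own proof is two sentences (``The first two statements were already remarked in \cite{CMS}. The last one is clear.''), and your write-up simply fills in the details behind those references --- computing $\chi(X_W)=\sum_{J:\,|W_J|<\infty}(-1)^{|J|}$ from the cell structure, identifying $K_W^{fin}$ with $\partial\Delta^n$ in the affine case, and counting $\emptyset$, singletons, and finite-weight pairs in the two-dimensional case. The only cosmetic point is that your discussion of the Euler-characteristic convention (the ``wait, more cleanly'' passage) could be tightened: the paper's $\chi(K_W^{fin})$ is by definition $\sum_{J\in K_W^{fin}}(-1)^{|J|}$ (the alternating cell count of $X_W$), which equals $1-\chi_{\mathrm{std}}(K_W^{fin})$ in the usual simplicial convention, and this immediately gives $1-\chi(S^{n-1})$ in the affine case.
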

\begin{proof} The first two statements were already remarked in
\cite{CMS}. The last one is clear.
\end{proof}

\begin{os} The cohomology of the orbit space in case
  $\tilde{A}_n$ with trivial coefficients is deduced from Corollary
\ref{prop:shaq} and from Theorem  \ref{teo:ratio}; that with local
coefficients in the $G_{\tilde{A}_{n}}$-module $\Q[q^\pmu]$ is
deduced from Theorem \ref{t:cohomqt}.
\end{os}

\def\cprime{$'$} \def\cftil#1{\ifmmode\setbox7\hbox{$\accent"5E#1$}\else
  \setbox7\hbox{\accent"5E#1}\penalty 10000\relax\fi\raise 1\ht7
  \hbox{\lower1.15ex\hbox to 1\wd7{\hss\accent"7E\hss}}\penalty 10000
  \hskip-1\wd7\penalty 10000\box7}
\providecommand{\bysame}{\leavevmode\hbox to3em{\hrulefill}\thinspace}
\providecommand{\MR}{\relax\ifhmode\unskip\space\fi MR }
\providecommand{\MRhref}[2]{%
  \href{http://www.ams.org/mathscinet-getitem?mr=#1}{#2}
}
\providecommand{\href}[2]{#2}


\end{document}